\def\@setcopyright{\@empty}
\newcommand{\prn}[1]{\left(#1\right)}
\newcommand{\brc}[1]{\left\{#1\right\}}
\newcommand{\allp}{1\le p\le\infty}
\newcommand{\Lp}{L_{p,\alpha}}
\newcommand{\Lmu}{L_{1,2}}
\newcommand{\norm}[1]{\left\|#1\right\|_{p,\alpha}}
\newcommand{\normpar}[2]{\left\|#1\right\|_{#2}}
\newcommand{\E}{E_n(f)_{p,\alpha}}
\newcommand{\Epar}[2]{E_{#1}\left(#2\right)_{p,\alpha}}
\newcommand{\T}[3]{T_{#1}^{#2}\left(#3\right)}
\newcommand{\hatT}[3]{\hat T_{#1}^{#2}\left(#3\right)}
\newcommand{\Si}[1]{\left(1-#1^2\right)}
\newcommand{\Dl}[3]{\Delta_{#1}^{#2}\left(#3\right)}
\newcommand{\arr}[2]{{{#1}_1,\dots,{#1}_{#2}}}
\newcommand{\w}{\hat\omega_r(f,\delta)_{p,\alpha}}
\newcommand{\wpar}[2]{\hat\omega_{#1}\left(#2\right)_{p,\alpha}}
\newcommand{\Px}[1]{P_{#1}^{(2,2)}}
\newcommand{\Py}[1]{P_{#1+2}^{(0,0)}}
\newcommand{\krn}[1]{%
  \left(
    \frac{\sin\frac{m#1}2}{\sin\frac{#1}2}
  \right)^{2q+4}}
\newcommand{\numericset}[1]{\mathbb #1}
\newcommand{\numN}{\numericset N}
\newtheorem{thm}{Theorem}[subsection]
\newtheorem{lmm}{Lemma}[subsection]
\newtheorem{cor}{Corollary}[subsection]
\newcounter{const}
\numberwithin{const}{thm}
\numberwithin{const}{lmm}
\numberwithin{const}{cor}
\newcommand{\Cn}[1][]{%
  \stepcounter{const}C_{\theconst}%
  \@ifnotempty{#1}{\newcounter{#1}\setcounter{#1}{\arabic{const}}}}
\newcommand{\refC}[1]{C_{\arabic{#1}}}
\newcommand{\lastC}{C_{\theconst}}
\newcommand{\prevC}[1][1]{%
	{\countdef\n=255
	 \n=\theconst
	 \advance\n by-#1
	 C_{\number\n}}}
\numberwithin{equation}{subsection}
\renewcommand{\theconst}{\arabic{const}}
\DeclareMathOperator*\esssup{ess\ sup}
\begin{document}

\title[On coincidence of classes of functions\dots]
	{On coincidence of classes of functions
		defined by a generalised modulus of smoothness
		and the appropriate inverse theorem}
\author{Faton M.~Berisha}
\address{F.~M.\ Berisha\\
	Faculty of Mathematics and Sciences\\
	University of Prishtina\\
	N\"ena Terez\"e~5\\
	10000 Prishtin\"e\\
	Kosovo}
	\email{faton.berisha@uni-pr.edu}

\keywords{Generalised modulus of smoothness,
	asymmetric operator of generalised translation,
	coincidence of classes,
	best approximations by algebraic polynomials}
\subjclass{Primary 41A35, Secondary 41A50, 42A16.}
\date{}

\begin{abstract}
	We give the theorem of coincidence of a class of functions
	defined by a generalised modulus of smoothness
	with a class of functions
	defined by the order of the best approximation
	by algebraic polynomials.
	We also prove the appropriate inverse theorem
	in approximation theory.
\end{abstract}

\maketitle

\setcounter{subsection}{-1}\subsection{}

In~\cite{potapov:mat-99}, an asymmetric operator
of generalised translation was introduced,
by means of it the generalised modulus of continuity was defined,
and the theorem of coincidence of a class of functions
defined by that modulus
with a class of functions with given order of the best approximation
by algebraic polynomials was proved.

In our paper the analogous results are obtained
for a generalised modulus of smoothness of order~$r$.
In addition,
in the present paper we prove a theorem inverse to the Jackson's theorem
related to that modulus of smoothness.

\subsection{}

By~$L_p$ we denote the set of functions~$f$ measurable
on the segment~$[-1,1]$ such that for $1\le p<\infty$
\begin{displaymath}
	\normpar f p
	=\prn{\int_{-1}^1|f(x)|^p\,dx}^{1/p}<\infty,
\end{displaymath}
and for $p=\infty$
\begin{displaymath}
	\normpar f\infty
	=\esssup_{-1\le x\le1}|f(x)|<\infty.
\end{displaymath}

Denote by~$\Lp$ the set of functions~$f$ such that
$f(x)\*(1-x^2)^\alpha\in L_p$,
and put
\begin{displaymath}
	\norm f=\normpar{f(x)(1-x^2)^\alpha}p.
\end{displaymath}

By~$\E$ we denote the best approximation of
the function $f\in\Lp$ by algebraic
polynomials of degree not greater than~$n-1$,
in~$\Lp$ metrics, i.e.
\begin{displaymath}
	\E=\inf_{P_n\in\mathbb P_n}\norm{f-P_n},
\end{displaymath}
where~$\mathbb P_n$ is the set of algebraic polynomials
of degree not greater than~$n-1$.

By $E(p,\alpha,\lambda)$ we denote the class of
functions $f\in\Lp$ satisfying the condition
\begin{displaymath}
	\E\le Cn^{-\lambda},
\end{displaymath}
where $\lambda>0$ and~$C$ is a constant
not depending on~$n$.

For a function~$f$ we define the operator of generalised
translation $\hatT t{}{f,x}$ by
\begin{multline*}
	\hatT t{}{f,x}
	  =\frac1{\pi\Si x}\int_0^\pi
		  \bigg(
			1-\prn{x\cos t-\sqrt{1-x^2}\sin t\cos\varphi}^2\\
	-2\sin^2t\sin^2\varphi+4\Si{x}\sin^2t\sin^4\varphi
		  \bigg)\\
	\times f(x\cos t-\sqrt{1-x^2}\sin t\cos\varphi)\,d\varphi.
\end{multline*}

By means of that operator of generalised translation
we define the generalised difference of order~$r$ by
\begin{align*}
	\Dl t1{f,x}
	  &=\Dl t{}{f,x}=\hatT t{}{f,x}-f(x),\\
	\Dl{\arr t r}r{f,x}
	  &=\Dl{t_r}{}{\Dl{\arr t{r-1}}{r-1}{f,x},x}
	\quad(r=2,3,\dotsc),
\end{align*}
and the generalised modulus of smoothness of order~$r$ by
\begin{displaymath}
	\w=\sup_{\substack{|t_j|\le\delta\\ j=1,2,\dots,r}}
		  \norm{\Dl{\arr t r}r{f,x}}
	\quad(r=1,2,\dotsc).
\end{displaymath}

Denote by $H(p,\alpha,r,\lambda)$ the class
of functions $f\in\Lp$ satisfying the condition
\begin{displaymath}
	\w\le C\delta^\lambda,
\end{displaymath}
where $\lambda>0$ and~$C$ is a constant
not depending on~$\delta$.

\subsection{}

Put $y=\cos t$, $z=\cos\varphi$ in the
operator $\hatT t{}{f,x}$, we denote it
by $\T y{}{f,x}$ and rewrite it in the form
\begin{multline*}
	\T y{}{f,x}
	  =\frac1{\pi\Si x}\int_{-1}^1
		  \big(
			1-R^2-2\Si y\Si z\\
	+4\Si x\Si y\Si{z}^2
		  \big)
		  f(R)\frac{dz}{\sqrt{1-z^2}},
\end{multline*}
where $R=xy-z\sqrt{1-x^2}\sqrt{1-y^2}$.

We define the operator of generalised translation
of order~$r$ by
\begin{align*}
	\T y1{f,x}         &=\T y{}{f,x},\\
	\T{\arr y r}r{f,x} &=\T{y_r}{}{\T{\arr y{r-1}}{r-1}{f,x},x}
	  \quad(r=2,3,\dotsc).
\end{align*}

By $P_\nu^{(\alpha,\beta)}(x)$ $(\nu=0,1,\dotsc)$
we denote the Jacobi's polynomials,
i.e.\ algebraic polynomials of degree~$\nu$ orthogonal
with the weight function $(1-x)^{\alpha}(1+x)^{\beta}$
on the segment~$[-1,1]$ and normed by the condition
$P_\nu^{(\alpha,\beta)}(1)=1$ $(\nu=0,1,\dotsc)$.

Denote by $a_n(f)$ the Fourier--Jacobi coefficients of a function~$f$,
integrable with the weight function $\Si{x}^2$
on the segment~$[-1,1]$,
with respect to the system of Jacobi polynomials
$\brc{\Px n(x)}_{n=0}^\infty$;
i.e.,
\begin{displaymath}
	a_n(f)=\int_{-1}^1f(x)\Px n(x)\Si{x}^2\,dx
	\quad(n=0,1,\dotsc).
\end{displaymath}

We define the following operators,
having an auxiliary role later on
\begin{align*}
	\T{1;y}{}{f,x}
	  &=\frac1{\pi\Si x}\int_{-1}^1\prn{1-R^2-2\Si y\Si z}f(R)
			\frac{dz}{\sqrt{1-z^2}},\\
	\T{2;y}{}{f,x}
	  &=\frac8{3\pi}\int_{-1}^1\Si{z}^2f(R)
		  \frac{dz}{\sqrt{1-z^2}},
\end{align*}
where $R=xy-z\sqrt{1-x^2}\sqrt{1-y^2}$,
and the corresponding operators of order~$r$
\begin{align*}
	\T{k;y}1{f,x}
	  &=\T{k;y}{}{f,x},\\
	\T{k;\arr y r}r{f,x}
	  &=\T{k;y_r}{}{\T{k;\arr y{r-1}}{r-1}{f,x},x}
	  \quad(r=2,3,\dotsc)
\end{align*}
for $k=1,2$.

\subsection{}

\begin{lmm}\label{lm:bernshtein-markov}
	Let $P_n(x)$ be an algebraic polynomial of degree
	not greater than $n-1$,
	$\allp$, $\alpha>-\frac1p$ and $\rho\ge0$.
	Then the following inequalities hold true
	\begin{gather*}
		\normpar{P'_n(x)}{p,\alpha+\frac12}
		  \le\Cn n\norm{P_n},\\
		\norm{P_n}
		  \le\Cn n^{2\rho}\normpar{P_n}{p,\alpha+\rho},
	\end{gather*}
	where the constants~$\prevC$ and~$\lastC$ do not depend
	on~$n$.
\end{lmm}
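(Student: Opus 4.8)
The plan is to reduce both inequalities to weighted Bernstein--Nikolskii inequalities for trigonometric polynomials by the substitution $x=\cos\theta$, and then to dispose of the two resulting estimates with the classical machinery for generalised Jacobi weights. First I would set $x=\cos\theta$ with $\theta\in[0,\pi]$, so that $g(\theta)=P_n(\cos\theta)$ is an even trigonometric polynomial of degree at most $n-1$ and $P_n'(\cos\theta)=-g'(\theta)/\sin\theta$. A direct change of variables then gives
\begin{align*}
	\normpar{P_n}{p,\alpha}^p
	  &=\int_0^\pi|g(\theta)|^p(\sin\theta)^{2\alpha p+1}\,d\theta,\\
	\normpar{P_n'}{p,\alpha+\frac12}^p
	  &=\int_0^\pi|g'(\theta)|^p(\sin\theta)^{2\alpha p+1}\,d\theta,
\end{align*}
with the obvious $\esssup$-versions when $p=\infty$. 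The hypothesis $\alpha>-\frac1p$ is exactly what makes the exponent $2\alpha p+1>-1$, so that the weight $w(\theta)=(\sin\theta)^{2\alpha p+1}$ is integrable and satisfies a doubling condition on $[0,\pi]$.

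After this reduction the first inequality reads
\begin{displaymath}
	\int_0^\pi|g'(\theta)|^pw(\theta)\,d\theta
	  \le Cn^p\int_0^\pi|g(\theta)|^pw(\theta)\,d\theta,
\end{displaymath}
which is the weighted $L_p$ Bernstein inequality for trigonometric polynomials with a generalised Jacobi (doubling) weight. I would obtain it by writing $g'$ as the convolution of $g$ with a de~la~Vall\'ee~Poussin-type derivative kernel of degree $\asymp n$ whose $L_1$ norm is $O(n)$, and then applying a convolution estimate valid for doubling weights; the case $p=\infty$ is the classical pointwise Bernstein inequality $\sqrt{1-x^2}\,|P_n'(x)|\le(n-1)\max|P_n|$.

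For the second inequality the reduction turns the claim into
\begin{displaymath}
	\int_0^\pi|g(\theta)|^p(\sin\theta)^{2\alpha p+1}\,d\theta
	  \le Cn^{2\rho p}\int_0^\pi|g(\theta)|^p(\sin\theta)^{2(\alpha+\rho)p+1}\,d\theta.
\end{displaymath}
Composing this inequality for weight shifts of size $\frac12$ reproduces the factor $n^{2\rho}$ for an arbitrary $\rho\ge0$, so I may assume $0\le\rho\le\frac12$. I would then split $[0,\pi]$ into the bulk $B=\{\theta:1/n\le\theta\le\pi-1/n\}$ and the two endpoint zones of length $1/n$. On $B$ one has $(\sin\theta)^{-2\rho p}\le Cn^{2\rho p}$, so the bulk contribution is immediate. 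On each endpoint zone the ratio of the two weights is of order $n^{2\rho p}$, and the remaining task is to dominate the $L_p$ mass of $|g|^p$ there by its mass over an adjacent bulk interval of comparable length; this is a local Nikolskii-type estimate for polynomials, for which the first inequality (equivalently, Markov's inequality near the endpoint) supplies the required control.

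The routine parts are the change of variables and the bulk estimate. The genuinely technical point is the behaviour near the endpoints $x=\pm1$: in the first inequality the vanishing or blow-up of the weight $w$ forces one to exploit its doubling property rather than a naive pointwise bound, and in the second inequality the endpoint zones are precisely where the two weights differ most, so the local comparison tying the endpoint mass to the adjacent bulk mass is the heart of the argument and the step I expect to demand the most care.
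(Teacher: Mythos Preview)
The paper does not supply its own proof of this lemma: immediately after the statement it writes ``Lemma is proved in~\cite{halilova:izv-74}'' and moves on, so there is nothing in the text to compare your argument against.

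Your route is the standard one and is essentially correct. The substitution $x=\cos\theta$ cleanly converts both estimates into statements about the even trigonometric polynomial $g(\theta)=P_n(\cos\theta)$ in $L_p$ with the generalised Jacobi weight $(\sin\theta)^{2\alpha p+1}$, and the hypothesis $\alpha>-1/p$ is exactly the integrability (doubling) condition on that weight. The first inequality then becomes the weighted Bernstein inequality for trigonometric polynomials, for which the convolution-with-kernel argument you outline is one of the standard proofs. The second inequality becomes a weighted Nikolskii/Schur-type comparison, and your reduction to $0\le\rho\le\tfrac12$ by composition and the bulk/endpoint split are both correct.

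The one place your sketch is a little loose is the endpoint zone in the second inequality. The pointwise ratio $(\sin\theta)^{-2\rho p}$ is not ``of order $n^{2\rho p}$'' on $[0,1/n]$; it blows up at $\theta=0$. What actually makes the estimate work is that one first passes to $\sup_{[0,1/n]}|g|$ and integrates the weight separately (using $2\alpha p+1>-1$), and then controls that supremum by a weighted $L_p$ norm over an adjacent interval of length $\asymp 1/n$ via a local Nikolskii/Remez inequality for degree-$n$ polynomials. This is routine and you clearly have it in mind, but the phrasing understates the step.
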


Lemma is proved in~\cite{halilova:izv-74}.

\begin{lmm}\label{lm:T*P}
	The operators~$T_{1;y}$ and~$T_{2;y}$ have the following
	properties
	\begin{gather*}
		\T{1;y}{}{\Px\nu,x}=\Px\nu(x)\Py\nu(y),\\
		\T{2;y}{}{\Px\nu,x}=\Px\nu(x)\Px\nu(y)
	\end{gather*}
	for $\nu=0,1,\dotsc$.
\end{lmm}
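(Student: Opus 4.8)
The plan is to verify both identities by substituting $f=\Px\nu$ and reducing each integral to a classical product (addition) formula for Gegenbauer polynomials. Throughout I put $x=\cos\theta$, $y=\cos t$, $z=\cos\varphi$, so that $R=\cos\theta\cos t-\sin\theta\sin t\cos\varphi$, the substitution turns $\int_{-1}^1(\cdot)\,\frac{dz}{\sqrt{1-z^2}}$ into $\int_0^\pi(\cdot)\,d\varphi$, and $\Si z=\sin^2\varphi$; since every kernel below is even in $\cos\varphi$, the sign of $\cos\varphi$ in $R$ is immaterial. The one external ingredient is Gegenbauer's product formula, which in the normalisation $P_\nu^{(\lambda-\frac12,\lambda-\frac12)}(1)=1$ used in the paper reads
\begin{equation*}
	P_\nu^{(\lambda-\frac12,\lambda-\frac12)}(x)\,P_\nu^{(\lambda-\frac12,\lambda-\frac12)}(y)
	  =\frac{\Gamma(\lambda+\frac12)}{\Gamma(\frac12)\,\Gamma(\lambda)}
		\int_0^\pi P_\nu^{(\lambda-\frac12,\lambda-\frac12)}(R)\,(\sin\varphi)^{2\lambda-1}\,d\varphi
	\qquad(\lambda>0).
\end{equation*}
I shall only need the Legendre case $\lambda=\frac12$ (weight $1$, polynomials $P_\nu^{(0,0)}$) and the case $\lambda=\frac52$, whose polynomials are precisely $\Px\nu$.

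For $\T{2;y}{}{\Px\nu,x}$ the computation is immediate: the measure $\Si z^2\,\frac{dz}{\sqrt{1-z^2}}$ becomes $\sin^4\varphi\,d\varphi$, so that
\begin{equation*}
	\T{2;y}{}{\Px\nu,x}=\frac8{3\pi}\int_0^\pi\Px\nu(R)\,\sin^4\varphi\,d\varphi .
\end{equation*}
This is exactly the product formula with $\lambda=\frac52$, the weight being $(\sin\varphi)^{2\lambda-1}=\sin^4\varphi$ and the constant $\frac{\Gamma(3)}{\Gamma(\frac12)\Gamma(\frac52)}=\frac8{3\pi}$ the very factor appearing in the definition of $T_{2;y}$. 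Hence $\T{2;y}{}{\Px\nu,x}=\Px\nu(x)\Px\nu(y)$, which is the second identity.

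The first identity is the genuinely \emph{asymmetric} one, and is where the work lies. Here the weight contains $\Si z=\sin^2\varphi=1-z^2$ with $z=(xy-R)\big/\bigl(\sqrt{\Si x}\sqrt{\Si y}\bigr)$, so I eliminate $\varphi$ in favour of $R$ in order to use only the Legendre product formula. A short computation turns the resulting weight into a quadratic in $R$,
\begin{equation*}
	\Si x\,\bigl(1-R^2-2\,\Si y\sin^2\varphi\bigr)
	  =(x^2+2y^2-1)-4xy\,R+(1+x^2)\,R^2,
\end{equation*}
and therefore, after accounting for the prefactor $\tfrac1{\pi\Si x}$,
\begin{equation*}
	\Si x^2\,\T{1;y}{}{\Px\nu,x}
	  =(x^2+2y^2-1)\,I_0-4xy\,I_1+(1+x^2)\,I_2,
	\qquad
	I_k=\frac1\pi\int_0^\pi R^k\,\Px\nu(R)\,d\varphi .
\end{equation*}
Each moment $I_k$ is then evaluated by expanding the polynomial $R^k\Px\nu(R)$ (of degree $\nu+k$) in the Legendre basis and applying the $\lambda=\frac12$ product formula term by term, which replaces every $P_m^{(0,0)}(R)$ by $P_m^{(0,0)}(x)\,P_m^{(0,0)}(y)$.

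It remains to check that the resulting finite combination collapses to $\Si x^2\,\Px\nu(x)\Py\nu(y)$, and this collapse is the main obstacle. It amounts to bookkeeping of the connection coefficients linking $\Px\nu,\,R\Px\nu,\,R^2\Px\nu$ to the Legendre polynomials (together with the three-term recurrence for $\Px\nu$), and it is exactly here that the asymmetry — the emergence of $\Py\nu(y)=P_{\nu+2}^{(0,0)}(y)$ in place of $\Px\nu(y)$ — is produced; one checks it directly for small $\nu$, and in general it is an elementary but lengthy Jacobi-polynomial identity. A conceptually cleaner route, which I would keep in reserve should the bookkeeping prove unwieldy, is to show that each $T_{k;y}$ commutes, in the variable $x$, with the Jacobi operator $\Si x\frac{d^2}{dx^2}-6x\frac{d}{dx}$, whose eigenfunctions are the $\Px\nu$; this would force $\T{k;y}{}{\Px\nu,x}=\mu_{k,\nu}(y)\,\Px\nu(x)$, and the multiplier $\mu_{k,\nu}(y)$ could then be identified by a single asymptotic evaluation as $x\to1$.
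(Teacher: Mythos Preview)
The paper does not actually prove this lemma here: immediately after the statement it writes ``Lemma~\ref{lm:T*P} is proved in~\cite{potapov:mat-99}.'' So there is no in-paper argument to compare against, and what follows assesses your proposal on its own terms.

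Your treatment of $T_{2;y}$ is complete and correct: after the substitution $z=\cos\varphi$ the definition of $T_{2;y}$ is exactly Gegenbauer's product formula at $\lambda=\tfrac52$, with weight $(\sin\varphi)^4$ and constant $\Gamma(3)\big/\bigl(\Gamma(\tfrac12)\Gamma(\tfrac52)\bigr)=8/(3\pi)$, so $\T{2;y}{}{\Px\nu,x}=\Px\nu(x)\Px\nu(y)$ follows at once.

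For $T_{1;y}$ your reduction to the moments $I_0,I_1,I_2$ is correct --- the identity
$\Si x\bigl(1-R^2-2\Si y\sin^2\varphi\bigr)=(x^2+2y^2-1)-4xyR+(1+x^2)R^2$
checks out --- and expanding each $R^k\Px\nu(R)$ in Legendre polynomials before applying the $\lambda=\tfrac12$ product formula is a sound plan. The honest gap is that you do not carry out the collapse to $\Si x^2\,\Px\nu(x)\Py\nu(y)$: calling it ``elementary but lengthy'' and verified ``for small $\nu$'' is not yet a proof for general $\nu$. One clean way to close it is to feed in the derivative relation $\Px\nu(x)=c_\nu\,\dfrac{d^2}{dx^2}\Py\nu(x)$ (an instance of the standard Jacobi derivative formula), which puts everything in the Legendre basis near index $\nu+2$ and lets the three-term recurrence organise the sum without ad~hoc connection coefficients. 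Your reserve route via commutation with the Jacobi differential operator is also viable, but note one missing step there: because the definition of $T_{1;y}$ carries the prefactor $1/\Si x$, it is not automatic that $\T{1;y}{}{\Px\nu,x}$ is a polynomial in $x$; you would need to check this first (for instance by verifying that your combination $(x^2+2y^2-1)I_0-4xyI_1+(1+x^2)I_2$ vanishes to second order at $x=\pm1$) before invoking the eigenfunction argument.
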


Lemma~\ref{lm:T*P} is proved in~\cite{potapov:mat-99}.

\begin{lmm}\label{lm:T*fg}
	Let $g(x)\T{k;y}{}{f,x}\in\Lmu$ for every~$y$.
	Then for $k=1,2$ the following equality holds true
	\begin{displaymath}
		\int_{-1}^1f(x)\T{k;y}{}{g,x}\Si{x}^2\,dx
		=\int_{-1}^1g(x)\T{k;y}{}{f,x}\Si{x}^2\,dx.
	\end{displaymath}
\end{lmm}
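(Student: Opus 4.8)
The plan is to diagonalise the operators $T_{k;y}$ in the orthogonal system of Jacobi polynomials $\Px\nu$ and to read the asserted symmetry off the resulting diagonal form. Write
\begin{equation*}
	Q_\nu^{(1)}(y)=\Py\nu(y),\qquad
	Q_\nu^{(2)}(y)=\Px\nu(y),\qquad
	\gamma_\nu=\int_{-1}^1\prn{\Px\nu(x)}^2\Si x^2\,dx,
\end{equation*}
so that Lemma~\ref{lm:T*P} reads $\T{k;y}{}{\Px\nu,x}=\Px\nu(x)\,Q_\nu^{(k)}(y)$ for $k=1,2$, while the $\Px\nu$ obey the orthogonality relation $\int_{-1}^1\Px m(x)\Px n(x)\Si x^2\,dx=\gamma_n\delta_{mn}$ (they are orthogonal with weight $(1-x)^2(1+x)^2=\Si x^2$).

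First I would dispose of the case of algebraic polynomials. For $f=\Px m$ and $g=\Px n$, linearity of $T_{k;y}$, the eigen-relation above and orthogonality give, for both $k=1$ and $k=2$,
\begin{equation*}
	\int_{-1}^1\Px m(x)\,\T{k;y}{}{\Px n,x}\Si x^2\,dx
	=Q_n^{(k)}(y)\int_{-1}^1\Px m(x)\Px n(x)\Si x^2\,dx
	=\gamma_n\delta_{mn}\,Q_n^{(k)}(y),
\end{equation*}
an expression unchanged under the interchange of $m$ and $n$. By bilinearity the identity of the lemma thus holds whenever $f$ and $g$ are algebraic polynomials.

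For general $f$ and $g$ I would expand them in Fourier--Jacobi series with respect to $\brc{\Px\nu}$. Applying $T_{k;y}$ term by term through Lemma~\ref{lm:T*P} yields $\T{k;y}{}{g,x}=\sum_\nu\gamma_\nu^{-1}a_\nu(g)\Px\nu(x)Q_\nu^{(k)}(y)$, and integrating against $f(x)\Si x^2$ term by term, orthogonality collapses the double series to
\begin{equation*}
	\int_{-1}^1 f(x)\,\T{k;y}{}{g,x}\Si x^2\,dx
	=\sum_{\nu=0}^\infty\frac{a_\nu(f)\,a_\nu(g)}{\gamma_\nu}\,Q_\nu^{(k)}(y).
\end{equation*}
The right-hand side is symmetric in $f$ and $g$, and the same computation applied to $\int_{-1}^1 g(x)\T{k;y}{}{f,x}\Si x^2\,dx$ returns the identical sum; hence the two integrals coincide.

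The one delicate point---and the expected main obstacle---is the justification of the two term-by-term operations: pulling $T_{k;y}$ inside the Fourier--Jacobi series and then integrating the series against $f(x)\Si x^2$. This is exactly where the hypothesis $g(x)\T{k;y}{}{f,x}\in\Lmu$ enters, supplying the absolute convergence needed to dominate the partial sums and, after inserting the defining integral representation of $T_{k;y}$, to apply Fubini's theorem to the resulting double integral. Equivalently, one may bypass the series by approximating $f$ and $g$ in $\Lmu$ by polynomials, invoking the already established polynomial case, and passing to the limit with the same integrability hypothesis controlling the error; the algebraic content rests entirely on Lemma~\ref{lm:T*P} and the orthogonality of the $\Px\nu$.
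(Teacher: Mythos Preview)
Your polynomial case is clean and correct: Lemma~\ref{lm:T*P} diagonalises $T_{k;y}$ on the $\Px\nu$, and orthogonality makes the bilinear form visibly symmetric. The difficulty is entirely in the passage to general $f$ and $g$, and here the argument has a real gap.

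Neither of the two routes you sketch for that passage is actually available at this point. The series route requires that the Fourier--Jacobi expansion of $g$ converge in a topology in which $T_{k;y}$ is continuous and in which you may then integrate term by term against $f(x)\Si x^2$; for functions that are merely in $\Lmu$ none of this is true in general (Jacobi--Fourier series need not converge in the weighted $L_1$ norm), and the hypothesis $g\,T_{k;y}(f)\in\Lmu$ says only that one of the two integrals in the statement exists---it does not dominate the partial sums of the other side. The approximation route needs continuity of the bilinear form $(f,g)\mapsto\int f\,T_{k;y}(g)\,\Si x^2\,dx$, hence some boundedness of $T_{k;y}$ on $\Lmu$; but the only mapping property in that space is Corollary~\ref{cr:T*fg}, which is derived \emph{from} the present lemma, so invoking it would be circular, and the later boundedness result (Lemma~\ref{lm:bound-T}) concerns different weights.

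The paper sidesteps all of this. It writes out
\[
\int_{-1}^1 f(x)\,\T{k;y}{}{g,x}\,\Si x^2\,dx
\]
as a double integral in $(x,z)$ directly from the kernel definition, and performs the explicit change of variables
\[
x=Ry+V\sqrt{1-R^2}\sqrt{1-y^2},\qquad
z=-\frac{R\sqrt{1-y^2}-Vy\sqrt{1-R^2}}{\sqrt{1-\bigl(Ry+V\sqrt{1-R^2}\sqrt{1-y^2}\bigr)^2}},
\]
under which the kernel, the weight $\Si x$ (respectively $\Si x^2\Si z^2$), and the measure $dz\,dx/\sqrt{1-z^2}$ transform so that the integral becomes $\int g(R)\,\T{k;y}{}{f,R}\,\Si R^2\,dR$. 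This is a one-step computation valid under the sole integrability hypothesis stated, with no appeal to series convergence, density, or operator bounds. If you want to keep your spectral viewpoint for intuition, you still need this change of variables (or an independent boundedness estimate) to close the argument for non-polynomial $f,g$.
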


\begin{proof}
	Let $k=1$ and
	\begin{multline*}
		I_1=\int_{-1}^1f(x)\T{1;y}{}{g,x}\Si{x}^2\,dx\\
		=\frac1\pi\int_{-1}^1\int_{-1}^1f(x)g(R)
			\prn{1-R^2-2\Si y\Si z}\Si x\frac{dz\,dx}{\sqrt{1-z^2}},
	\end{multline*}
	where $R=xy-z\sqrt{1-x^2}\sqrt{1-y^2}$.
	Performing change of variables in the double
	integral by the formulas
	\begin{equation}\label{eq:x-z}
		\begin{split}
		x &=Ry+V\sqrt{1-R^2}\sqrt{1-y^2},\\
		z &=-\frac{R\sqrt{1-y^2}-Vy\sqrt{1-R^2}}
			  {\sqrt{1-\prn{Ry+V\sqrt{1-R^2}\sqrt{1-y^2}}^2}},
		\end{split}
	\end{equation}
	we get
	\begin{multline*}
		I_1=\frac1\pi\int_{-1}^1\int_{-1}^1
			\Si Rf\prn{Ry+V\sqrt{1-R^2}\sqrt{1-y^2}}g(R)\\
		\times\prn{1-\prn{Ry+V\sqrt{1-R^2}\sqrt{1-y^2}}^2-2\Si y\Si V}
				\frac{dV\,dR}{\sqrt{1-V^2}}\\
		=\int_{-1}^1g(R)\T{1;y}{}{f,R}\Si{R}^2\,dR,
	\end{multline*}
	which proves the equality of the lemma for $k=1$.
	
	Let $k=2$ and
	\begin{multline*}
		I_2=\int_{-1}^1f(x)\T{2;y}{}{g,x}\Si{x}^2\,dx\\
		=\frac8{3\pi}\int_{-1}^1\int_{-1}^1f(x)g(R)\Si{x}^2\Si{z}^2
			\frac{dz\,dx}{\sqrt{1-z^2}}.
	\end{multline*}
	Performing change of variables in that double integral
	by the formulas~\eqref{eq:x-z} we get
	\begin{multline*}
		I_2=\frac8{3\pi}\int_{-1}^1\int_{-1}^1
			f\prn{Ry+V\sqrt{1-R^2}\sqrt{1-y^2}}g(R)\Si{R}^2\\
		\times\Si{V}^2\frac{dV\,dR}{\sqrt{1-V^2}}
		  =\int_{-1}^1g(R)\T{2;y}{}{f,R}\Si{R}^2\,dR.
	\end{multline*}
	
	Lemma~\ref{lm:T*fg} is proved.
\end{proof}

\begin{cor}\label{cr:T*fg}
	If $f\in\Lmu$, then for every natural number~$r$ we have
	$\T{k;y}{r}{f,x}\in\Lmu$ $(k=1,2)$.
\end{cor}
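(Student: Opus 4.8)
\textit{Proof proposal.}
The plan is to peel off one factor at a time. By the very definition of the operators of order~$r$,
\begin{displaymath}
	\T{k;\arr y r}r{f,x}=\T{k;y_r}{}{\T{k;\arr y{r-1}}{r-1}{f,x},x},
\end{displaymath}
so the whole corollary follows by induction on~$r$ once I know the single‑step fact that, for each fixed~$y$, the map $f\mapsto\T{k;y}{}{f,x}$ sends $\Lmu$ into itself: the inductive step applies this to $\T{k;\arr y{r-1}}{r-1}{f,x}\in\Lmu$ (the induction hypothesis), and the case $r=1$ is the single‑step fact itself. Thus the entire matter reduces to proving $\T{k;y}{}{f,x}\in\Lmu$ for $f\in\Lmu$.

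For $k=2$ this is immediate. Since the kernel $\frac8{3\pi}\Si z^2$ is nonnegative, Tonelli's theorem and the triangle inequality lose nothing and give
\begin{displaymath}
	\normpar{\T{2;y}{}{f,x}}{1,2}
	  \le\frac8{3\pi}\int_{-1}^1\int_{-1}^1|f(R)|\,\Si x^2\Si z^{3/2}\,dz\,dx,
	\qquad R=xy-z\sqrt{1-x^2}\sqrt{1-y^2}.
\end{displaymath}
Now I would apply the change of variables~\eqref{eq:x-z} exactly as in the proof of Lemma~\ref{lm:T*fg}; taking there the outer factor to be $1$ and the $R$‑factor to be $|f|$, that substitution transforms the weight $\Si x^2\Si z^{3/2}\,dz\,dx$ into $\Si R^2\Si V^{3/2}\,dV\,dR$ while $R$ passes to the outer integration variable. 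The $V$‑integral then factors out, and because $\frac8{3\pi}\int_{-1}^1\Si V^{3/2}\,dV=1$ I obtain $\normpar{\T{2;y}{}{f,x}}{1,2}\le\normpar f{1,2}$, so $\T{2;y}{}{f,x}\in\Lmu$.

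For $k=1$ the same scheme meets the principal difficulty, and this is the step I expect to be the main obstacle. The operator now carries the factor $\frac1{\pi\Si x}$, and its kernel $1-R^2-2\Si y\Si z$ is not of one sign; consequently the naive triangle‑inequality bound is too wasteful, for after~\eqref{eq:x-z} it replaces the weight $\Si R^2$ by the merely first‑power weight $\Si R$, which $\normpar f{1,2}$ does not control. The singular factor $\frac1{\Si x}$ is kept in check only by the cancellation recorded in
\begin{displaymath}
	\int_{-1}^1\prn{1-R^2-2\Si y\Si z}\frac{dz}{\sqrt{1-z^2}}
	  =\frac{\pi}2\prn{3y^2-1}\Si x,
\end{displaymath}
which is Lemma~\ref{lm:T*P} with $\nu=0$ (as $\Py0(y)=\tfrac12(3y^2-1)$) and shows the $z$‑integral of the kernel to be itself $O\prn{\Si x}$. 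The plan is therefore to retain the signed kernel and to subtract a constant before estimating, writing
\begin{displaymath}
	\T{1;y}{}{f,x}
	  =\frac1{\pi\Si x}\int_{-1}^1\prn{1-R^2-2\Si y\Si z}
			\prn{f(R)-f(xy)}\frac{dz}{\sqrt{1-z^2}}
		+\frac{3y^2-1}2\,f(xy),
\end{displaymath}
so that as $x\to\pm1$, where $R\to xy$, the surviving integrand is small and offsets $\frac1{\Si x}$. Carrying this cancellation through the substitution~\eqref{eq:x-z}—rather than discarding it by passing to absolute values—is exactly where the delicacy of the corollary lies; once the single‑step bound is secured for $k=1$ as well, the induction of the first paragraph completes the proof for every~$r$.
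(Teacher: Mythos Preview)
Your reduction to the single-step case by induction on~$r$ is exactly what the paper does. The departure is in how the base case is handled.

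The paper's argument for $r=1$ does not estimate kernels at all: it simply takes $g\equiv1$ in Lemma~\ref{lm:T*fg}. By Lemma~\ref{lm:T*P} with $\nu=0$, one has $\T{1;y}{}{1,x}=\tfrac32 y^2-\tfrac12$ and $\T{2;y}{}{1,x}=1$; both are bounded in~$x$, so $f(x)\,\T{k;y}{}{1,x}\in\Lmu$ whenever $f\in\Lmu$. Lemma~\ref{lm:T*fg} then gives
\begin{displaymath}
\int_{-1}^1\T{k;y}{}{f,x}\Si{x}^2\,dx
=\int_{-1}^1 f(x)\,\T{k;y}{}{1,x}\Si{x}^2\,dx,
\end{displaymath}
and from finiteness of the right-hand side the paper reads off $\T{k;y}{}{f,x}\in\Lmu$. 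Your treatment of $k=2$ by a direct estimate is correct (it is essentially the change of variables underlying Lemma~\ref{lm:T*fg}), but it is not the route the paper takes.

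For $k=1$ your proposal has a genuine gap, and the obstacle you diagnose is a phantom. The remedy you outline---subtracting $f(xy)$---is not available for a general $f\in\Lmu$: such $f$ is defined only almost everywhere, so $f(xy)$ has no meaning, and even for a representative with point values the smallness of $f(R)-f(xy)$ as $R\to xy$ would require continuity you do not have. But no such device is needed. Lemma~\ref{lm:elementary} gives $\Si{y}\Si{z}\le\Si{R}$, whence
\begin{displaymath}
\bigl|\,1-R^2-2\Si{y}\Si{z}\,\bigr|\le 3\,\Si{R},
\end{displaymath}
and the ``naive'' triangle-inequality bound becomes
\begin{displaymath}
\normpar{\T{1;y}{}{f,x}}{1,2}
\le\frac{3}{\pi}\int_{-1}^1\int_{-1}^1 |f(R)|\,\Si{R}\,\Si{x}\,\frac{dz\,dx}{\sqrt{1-z^2}}.
\end{displaymath}
Under the substitution~\eqref{eq:x-z} the measure $\Si{x}\,\dfrac{dz\,dx}{\sqrt{1-z^2}}$ turns into $\Si{R}\,\dfrac{dV\,dR}{\sqrt{1-V^2}}$, and together with the factor $\Si{R}$ retained from the kernel this produces exactly the weight $\Si{R}^2$, giving $\normpar{\T{1;y}{}{f,x}}{1,2}\le 3\,\normpar{f}{1,2}$. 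The ``merely first-power'' weight you feared appears only if you replace the kernel by a constant; keep its $\Si{R}$ and the naive estimate goes through.
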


\begin{proof}
	Put $g(x)\equiv1$ on $[-1,1]$, considering that by
	Lemma~\ref{lm:T*P}
	(see~\cite[vol.~II, p.~180]{erdelyi-m-o-t:transcendental})
	\begin{gather*}
		\T{1;y}{}{1,x}=\T{1;y}{}{\Px0,x}
		  =\Px0(x)P_2^{(0,0)}(y)=\frac32 y^2-\frac12,\\
		\T{2;y}{}{1,x}=1,
	\end{gather*}
	we have $f(x)\T{k;y}{}{1,x}\in\Lmu$ $(k=1,2)$.
	Hence, applying Lemma~\ref{lm:T*fg}
	we derive
	\begin{displaymath}
		\int_{-1}^1\T{k;y}{}{f,x}\Si{x}^2\,dx
		  =\int_{-1}^1f(x)\T{k;y}{}{1,x}\Si{x}^2\,dx
		\quad (k=1,2).
	\end{displaymath}
	Therefrom it follows that $\T{k;y}{}{f,x}\in\Lmu$.
	Now the corollary is proved by induction.
\end{proof}

\begin{lmm}\label{lm:T*P1}
	Let $f\in\Lmu$.
	For every natural number~$n$
	the following equality holds true
	\begin{displaymath}
		\int_{-1}^1\T{1;y}{}{f,x}P_n^{(1,1)}(y)\,dy
		=\sum_{m=0}^{n-2}a_m(f)\gamma_m(x),
	\end{displaymath}
	where~$\gamma_m(x)$ is an algebraic polynomial
	of degree not greater than $n-2$,
	and $\gamma_m(x)\equiv0$ for $n=0$ or $n=1$.
\end{lmm}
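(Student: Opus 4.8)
The plan is to pin down the left-hand side, which I denote
$\Phi(x)=\int_{-1}^1\T{1;y}{}{f,x}P_n^{(1,1)}(y)\,dy$,
by computing its Fourier--Jacobi coefficients $a_k(\Phi)$ with respect to the system $\brc{\Px k}_{k=0}^\infty$ and showing that they vanish for $k\ge n-1$. First I would record that $\Phi\in\Lmu$: by Corollary~\ref{cr:T*fg} the function $\T{1;y}{}{f,\cdot}$ lies in $\Lmu$ for each~$y$, and since $P_n^{(1,1)}$ is bounded on $[-1,1]$, a Tonelli/Fubini verification shows that the integral in~$y$ converges in $\Lmu$, defines $\Phi\in\Lmu$, and legitimises the interchange of the $x$- and $y$-integrations used below.

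I would then compute $a_k(\Phi)$. Writing $a_k(\Phi)=\int_{-1}^1\Phi(x)\Px k(x)\Si{x}^2\,dx$, interchanging the order of integration, and applying Lemma~\ref{lm:T*fg} with $g=\Px k$ (its hypothesis holds because $\Px k(x)\T{1;y}{}{f,x}\in\Lmu$) followed by Lemma~\ref{lm:T*P}, I obtain
\begin{align*}
a_k(\Phi)
  &=\int_{-1}^1P_n^{(1,1)}(y)
      \prn{\int_{-1}^1\T{1;y}{}{f,x}\Px k(x)\Si{x}^2\,dx}dy\\
  &=\int_{-1}^1P_n^{(1,1)}(y)
      \prn{\int_{-1}^1f(x)\T{1;y}{}{\Px k,x}\Si{x}^2\,dx}dy\\
  &=a_k(f)\int_{-1}^1P_n^{(1,1)}(y)\Py k(y)\,dy.
\end{align*}
Thus everything is reduced to the scalar quantity $c_{k,n}=\int_{-1}^1P_n^{(1,1)}(y)\Py k(y)\,dy$.

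The decisive observation is an index count. The polynomial $\Py k=P_{k+2}^{(0,0)}$ is the Legendre polynomial of degree $k+2$, hence it is orthogonal on $[-1,1]$ with weight~$1$ to every polynomial of degree less than $k+2$. Since $\deg P_n^{(1,1)}=n$, the integral $c_{k,n}$ vanishes as soon as $n<k+2$, i.e.\ for $k\ge n-1$; in particular $c_{k,n}=0$ for every~$k$ when $n=0$ or $n=1$. Setting $\gamma_m(x)=\frac{c_{m,n}}{\lambda_m}\Px m(x)$ with $\lambda_m=\int_{-1}^1\prn{\Px m(x)}^2\Si{x}^2\,dx$, the sum $Q(x)=\sum_{m=0}^{n-2}a_m(f)\gamma_m(x)$ is an algebraic polynomial of degree at most $n-2$ (an empty sum, hence $\equiv0$, when $n=0$ or $n=1$) and, by construction, has exactly the same Fourier--Jacobi coefficients as~$\Phi$, namely $a_k(f)c_{k,n}$.

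Finally I would invoke completeness: since $\brc{\Px k}$ is complete and $\Phi-Q\in\Lmu$ has all its Fourier--Jacobi coefficients equal to zero, it follows, by density of polynomials together with the Weierstrass theorem, that $\Phi-Q=0$ almost everywhere, which is the claimed identity with each $\gamma_m$ of degree $m\le n-2$. The main obstacle is not the algebra but the analytic bookkeeping of the opening paragraph---showing $\Phi\in\Lmu$, justifying the interchange of integrations, and checking the hypothesis of Lemma~\ref{lm:T*fg}; once these are secured, the orthogonality of the Legendre polynomials and the completeness of the Jacobi system complete the argument.
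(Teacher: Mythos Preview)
The paper does not prove this lemma; it merely cites~\cite{potapov:mat-99}, so there is no in-paper argument against which to compare your approach.

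Your proof is correct. The computation $a_k(\Phi)=a_k(f)\,c_{k,n}$ via Lemma~\ref{lm:T*fg} and Lemma~\ref{lm:T*P} is clean, and the Legendre orthogonality of $\Py k=P_{k+2}^{(0,0)}$ with weight~$1$ indeed forces $c_{k,n}=0$ for $k\ge n-1$; with $\gamma_m(x)=(c_{m,n}/\lambda_m)\Px m(x)$ the completeness step (an element of $L_1[-1,1]$ annihilating all polynomials vanishes a.e., applied to $(\Phi-Q)(x)\Si{x}^2$) finishes the job. The analytic bookkeeping you flag is routine once one notes, using Lemma~\ref{lm:elementary}, that $\bigl|1-R^2-2\Si y\Si z\bigr|\le 3\Si R$; combined with the change of variables~\eqref{eq:x-z} this gives the uniform-in-$y$ bound $\normpar{\T{1;y}{}{f,\cdot}}{1,2}\le 3\normpar f{1,2}$, which secures both $\Phi\in\Lmu$ and the Fubini interchange.
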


Lemma~\ref{lm:T*P1} is proved in~\cite{potapov:mat-99}.

\begin{lmm}\label{lm:T*-Q}
	Let~$q$ and~$m$ given natural numbers and let $f\in\Lmu$.
	For every natural numbers~$l$ and~$r$ $(l\le r)$
	the function
	\begin{displaymath}
		Q_1^{(l)}(x)
		=\int_0^\pi\dots\int_0^\pi\T{1;\arr{\cos t}l}l{f,x}
			  \prod_{s=1}^r\krn{t_s}\sin^3t_s\,dt_1\dots dt_r
	\end{displaymath}
	is an algebraic polynomial
	of degree not greater than $(q+2)\*(m-1)$.
\end{lmm}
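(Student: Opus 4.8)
The plan is to reduce every integration against the kernel to an integration against an algebraic polynomial in~$\cos t$, and then to observe that the single integration carrying the outermost translation operator already forces the degree in~$x$ down to $(q+2)(m-1)$, the remaining integrations affecting only the coefficients. The starting point is the elementary fact that $\krn t$ is an even trigonometric polynomial in~$t$ of degree exactly $(q+2)(m-1)$: since $\frac{\sin\frac{mt}2}{\sin\frac t2}=\sum_{k=0}^{m-1}\cos\frac{(m-1-2k)t}2$ has top frequency $\frac{m-1}2$, raising it to the even power $2q+4=2(q+2)$ multiplies the top frequency by $q+2$ while keeping all frequencies integral. Being even, the kernel equals $K(\cos t)$ for a uniquely determined algebraic polynomial~$K$ with $\deg K=(q+2)(m-1)=:N$. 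As the substitution $y=\cos t$ carries $\sin^3 t\,dt$ into $\Si y\,dy$, one has for every integrable~$h$
\[
	\int_0^\pi h(\cos t)\krn t\sin^3 t\,dt=\int_{-1}^1 h(y)K(y)\Si y\,dy,
\]
where $K(y)\Si y$ is an algebraic polynomial in~$y$ of degree $N+2$.

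First I would discard the $r-l$ integrations in $t_{l+1},\dots,t_r$, which do not touch~$f$ and merely contribute the constant $\int_{-1}^1K(y)\Si y\,dy$ raised to the power $r-l$. For the remaining variables I would peel off the outermost operator: setting $g=\T{1;\arr y{l-1}}{l-1}{f,\cdot}$, which lies in~$\Lmu$ by Corollary~\ref{cr:T*fg}, and expanding $K(y)\Si y=\sum_{n=0}^{N+2}b_nP_n^{(1,1)}(y)$, the integration in~$t_l$ becomes, by Lemma~\ref{lm:T*P1},
\[
	\int_{-1}^1\T{1;y_l}{}{g,x}K(y_l)\Si{y_l}\,dy_l=\sum_{n=0}^{N+2}b_n\sum_{\mu=0}^{n-2}a_\mu(g)\gamma_\mu^{(n)}(x),
\]
an algebraic polynomial in~$x$ of degree at most $(N+2)-2=N$.

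Since only the coefficients $a_\mu(g)$ depend on the surviving variables $t_1,\dots,t_{l-1}$, while the polynomials $\gamma_\mu^{(n)}(x)$ do not, integrating the last display against $\prod_{s=1}^{l-1}K(\cos t_s)\sin^3 t_s\,dt_s$ will leave the degree in~$x$ unchanged and only replace each $a_\mu(g)$ by a constant. To see that these constants are finite I would transfer the operators off~$f$ using the self-adjointness relation of Lemma~\ref{lm:T*fg} together with the eigenfunction identity $\T{1;y}{}{\Px\mu,x}=\Px\mu(x)\Py\mu(y)$ of Lemma~\ref{lm:T*P}, obtaining $a_\mu(g)=a_\mu(f)\prod_{s=1}^{l-1}\Py\mu(\cos t_s)$, a bounded function of the~$t_s$. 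It then follows that $Q_1^{(l)}$ is a linear combination of the polynomials $\gamma_\mu^{(n)}(x)$ with $n\le N+2$, hence an algebraic polynomial of degree at most $N=(q+2)(m-1)$.

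The point demanding the most care is the degree bookkeeping. The weight $\Si y$ produced by the substitution raises the degree of the integrand to $N+2$, but this gain is cancelled exactly by the shift ``$-2$'' in the degree bound of Lemma~\ref{lm:T*P1}, so that the final degree is $N$ and not $N+2$. Establishing the precise degree $(q+2)(m-1)$ of the kernel as a polynomial in $\cos t$, and checking that these two shifts compensate, is the crux of the argument.
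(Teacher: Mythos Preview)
Your proposal is correct and follows essentially the same route as the paper: expand the Fej\'er-type kernel as a polynomial in $\cos t$ of degree $(q+2)(m-1)$, absorb the factor $\sin^2 t$ to obtain an expansion in $P_k^{(1,1)}(\cos t)$ of degree $(q+2)(m-1)+2$, apply Lemma~\ref{lm:T*P1} to the $t_l$-integral to land on polynomials $\gamma_\mu(x)$ of degree at most $(q+2)(m-1)$, and then transfer the remaining $l-1$ operators off~$f$ via Lemma~\ref{lm:T*fg} and Lemma~\ref{lm:T*P} to reduce $a_\mu(g)$ to $a_\mu(f)\prod_s\Py\mu(\cos t_s)$. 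Your explicit discarding of the $r-l$ inert integrations and the careful tracking of the ``$+2$ then $-2$'' degree shift are presentational differences only.
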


\begin{proof}
	Since
	\begin{displaymath}
		A_s=\krn{t_s}=\sum_{k=0}^{(q+2)(m-1)}a_k\cos kt_s
		=\sum_{k=0}^{(q+2)(m-1)}b_k(\cos t_s)^k,
	\end{displaymath}
	it follows that
	\begin{multline*}
		A_s\sin^2t_s
		  =\sum_{k=0}^{(q+2)(m-1)}b_k(\cos t_s)^k\prn{1-\cos^2t_s}
			=\sum_{k=0}^{(q+2)(m-1)+2}c_k(\cos t_s)^k\\
		=\sum_{k=0}^{(q+2)(m-1)+2}\alpha_k P_k^{(1,1)}(\cos t_s)
		  \quad(s=1,2,\dots,r).
	\end{multline*}
	Hence we have
	\begin{multline*}
		Q_1^{(l)}(x)
		  =\sum_{k=0}^{(q+2)(m-1)+2}\alpha_k\int_0^\pi\dots\int_0^\pi
				  \prod_{\genfrac{}{}{0pt}{1}{s=1}{s\ne l}}^r\krn{t_s}\\
		\times\sin^3t_s\,dt_1\dots dt_{l-1}\,dt_{l+1}\dots dt_r
			  \int_0^\pi\T{1;\arr{\cos t}l}l{f,x}
				P_k^{(1,1)}(\cos t_l)\sin t_l\,dt_l.
	\end{multline*}
	Let
	\begin{displaymath}
		\varphi_{l,k}(x)
		=\int_0^\pi\T{1;\arr{\cos t}l}l{f,x}
			P_k^{(1,1)}(\cos t_l)\sin t_l\,dt_l.
	\end{displaymath}
	Substituting $y=\cos t_l$ we obtain
	\begin{displaymath}
		\varphi_{l,k}(x)
		=\int_{-1}^1\T{1;y}{}{\T{1;\arr{\cos t}{l-1}}{l-1}{f,x},x}
			P_k^{(1,1)}(y)\,dy.
	\end{displaymath}
	Using Lemma~\ref{lm:T*P1} we get
	\begin{displaymath}
		\varphi_{l,k}(x)
		=\sum_{m=0}^{k-2}\gamma_m(x)
		  \int_{-1}^1\T{1;\arr{\cos t}{l-1}}{l-1}{f,R}
			  \Px m(R)\Si{R}^2\,dR.
	\end{displaymath}
	Considering Corollary~\ref{cr:T*fg} we have that
	$\T{1;\arr{\cos t}{l-1}}{l-1}{f,R}\in\Lmu$.
	Applying $l-1$  times Lemma~\ref{lm:T*fg} and
	Lemma~\ref{lm:T*P} we obtain
	\begin{multline*}
		\varphi_{l,k}(x)
		=\sum_{m=0}^{k-2}\gamma_m(x)
			\int_{-1}^1\T{1;\arr{\cos t}{l-2}}{l-2}{f,R}
					\T{1;\cos t_{l-1}}{}{\Px m,R}\\
		\times\Si{R}^2\,dR
		  =\sum_{m=0}^{k-2}\gamma_m(x)\Py m(\cos t_{l-1})\\
		\times\int_{-1}^1\T{1;\arr{\cos t}{l-2}}{l-2}{f,R}
			  \Px m(R)\Si{R}^2\,dR\\
		=\sum_{m=0}^{k-2}\gamma_m(x)
			  \Py m(\cos t_1)\dots\Py m(\cos t_{l-1})\\
		\times\int_{-1}^1f(R)\Px m(R)\Si{R}^2\,dR
			=\sum_{m=0}^{k-2}\gamma_m(x)a_m(f)
				\prod_{s=1}^{l-1}\Py m(\cos t_s),
	\end{multline*}
	where $a_m(f)$ is the Fourier--Jacobi coefficient
	of the function~$f$ with respect
	to the system $\brc{\Px m(x)}_{m=0}^\infty$.
	Substituting $\varphi_{l,k}(x)$ in the
	expression for $Q_1^{(l)}(x)$ we get
	\begin{displaymath}
		Q_1^{(l)}(x)
		=\sum_{k=0}^{(q+2)(m-1)+2}\alpha_k
			\sum_{m=0}^{k-2}\beta_m\gamma_m(x).
	\end{displaymath}
	Since~$\gamma_m(x)$ is an algebraic polynomial of degree
	not greater than $k-2$ for $k\ge2$ and
	$\gamma_m(x)\equiv0$ for $k=0$ and $k=1$,
	then the last equality yields that $Q_1^{(l)}(x)$
	is an algebraic polynomial of degree not greater than
	$(q+2)\*(m-1)$.
	
	Lemma~\ref{lm:T*-Q} is proved.
\end{proof}
	
\begin{lmm}\label{lm:S-Q}
	Let~$q$ and~$m$ given natural numbers.
	Let $f\in\Lmu$.
	For every natural numbers~$l$ and~$r$ $(l\le r)$
	the function
	\begin{displaymath}
		Q_2^{(l)}(x)
		=\int_0^\pi\dots\int_0^\pi\T{2;\arr{\cos t}l}l{f,x}
			  \prod_{s=1}^r\krn{t_s}\sin^5t_s\,dt_1\dots dt_r
	\end{displaymath}
	is an algebraic polynomial of degree not greater
	than $(q+2)\*(m-1)$.
\end{lmm}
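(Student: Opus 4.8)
The plan is to follow the scheme of the proof of Lemma~\ref{lm:T*-Q}, replacing~$T_{1;y}$ by~$T_{2;y}$ and using the sharper eigenfunction relation $\T{2;y}{}{\Px\nu,x}=\Px\nu(x)\Px\nu(y)$ supplied by Lemma~\ref{lm:T*P}. First I would expand the $l$-th kernel factor: since $\krn{t_l}$ is an even trigonometric polynomial of degree $(q+2)(m-1)$, it is a polynomial in~$\cos t_l$ of the same degree, and the system $\brc{\Px k(\cos t_l)}$ being a triangular basis of that space I may write
\begin{displaymath}
	\krn{t_l}=\sum_{k=0}^{(q+2)(m-1)}\alpha_k\Px k(\cos t_l).
\end{displaymath}
In contrast with the $T_{1;y}$ case, no power of~$\sin t_l$ is absorbed into this expansion; instead the factor $\sin^4t_l$ becomes the orthogonality weight $\Si{y}^2$ after the substitution $y=\cos t_l$, while the remaining $\sin t_l\,dt_l$ plays the role of the Jacobian.

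Extracting the integration in~$t_l$ and performing that substitution, each term of the sum reduces the matter to evaluating
\begin{displaymath}
	\varphi_{l,k}(x)
	  =\int_{-1}^1\T{2;y}{}{\T{2;\arr{\cos t}{l-1}}{l-1}{f,\cdot},x}
			\Px k(y)\Si{y}^2\,dy.
\end{displaymath}
The key step --- the analogue of Lemma~\ref{lm:T*P1} for the operator~$T_{2;y}$ --- is the identity
\begin{displaymath}
	\int_{-1}^1\T{2;y}{}{g,x}\Px k(y)\Si{y}^2\,dy=a_k(g)\Px k(x)
	\qquad(g\in\Lmu),
\end{displaymath}
which I would establish without recourse to a Jacobi expansion. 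The decisive observation is that, since $R=xy-z\sqrt{1-x^2}\sqrt{1-y^2}$ is symmetric in~$x$ and~$y$ and the prefactor of~$T_{2;y}$ is constant, one has $\T{2;y}{}{g,x}=\T{2;x}{}{g,y}$ --- a symmetry the operator~$T_{1;y}$ lacks. Hence, applying Lemma~\ref{lm:T*fg} (with $f=\Px k$ and translation parameter~$x$) and then Lemma~\ref{lm:T*P},
\begin{multline*}
	\int_{-1}^1\T{2;x}{}{g,y}\Px k(y)\Si{y}^2\,dy
	  =\int_{-1}^1g(y)\T{2;x}{}{\Px k,y}\Si{y}^2\,dy\\
	=\Px k(x)\int_{-1}^1g(y)\Px k(y)\Si{y}^2\,dy=a_k(g)\Px k(x),
\end{multline*}
the hypotheses of Lemma~\ref{lm:T*fg} being met because $g\in\Lmu$ by Corollary~\ref{cr:T*fg} and $\T{2;x}{}{\Px k,y}$ is a polynomial in~$y$.

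It then remains to unwind the coefficient $a_k\prn{\T{2;\arr{\cos t}{l-1}}{l-1}{f,\cdot}}$. Applying Lemma~\ref{lm:T*fg} together with Lemma~\ref{lm:T*P} exactly $l-1$ times --- each step peeling off one translation and producing a factor $\Px k(\cos t_s)$ --- I obtain
\begin{displaymath}
	\varphi_{l,k}(x)=a_k(f)\Px k(x)\prod_{s=1}^{l-1}\Px k(\cos t_s).
\end{displaymath}
Substituting back into~$Q_2^{(l)}(x)$, the integrations in the remaining variables $t_1,\dots,t_{l-1},t_{l+1},\dots,t_r$ against their kernels produce constants~$\beta_k$, so that
\begin{displaymath}
	Q_2^{(l)}(x)=\sum_{k=0}^{(q+2)(m-1)}\alpha_k\beta_k a_k(f)\Px k(x).
\end{displaymath}
Since $\deg\Px k=k\le(q+2)(m-1)$, this exhibits $Q_2^{(l)}(x)$ as an algebraic polynomial of degree at most $(q+2)(m-1)$, as required. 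I expect the one delicate point to be the symmetry-based identity above --- in particular verifying the integrability hypotheses of Lemma~\ref{lm:T*fg} at each of its applications --- whereas the degree bookkeeping is in fact simpler than in Lemma~\ref{lm:T*-Q}, the relevant polynomials~$\Px k$ already having degree $\le(q+2)(m-1)$ with no loss of two units to compensate.
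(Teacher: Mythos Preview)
Your proposal is correct and follows essentially the same route as the paper: expand the $l$-th kernel factor in the Jacobi basis $\brc{\Px k}$, use the symmetry $\T{2;y}{}{g,x}=\T{2;x}{}{g,y}$ together with Lemmas~\ref{lm:T*fg} and~\ref{lm:T*P} to obtain $\varphi_{l,k}(x)=a_k(f)\Px k(x)\prod_{s=1}^{l-1}\Px k(\cos t_s)$, and conclude that $Q_2^{(l)}$ is a linear combination of the~$\Px k$ with $k\le(q+2)(m-1)$. Your explicit remark that the factor $\sin^4t_l$ supplies the orthogonality weight $\Si{y}^2$ after the substitution $y=\cos t_l$, and your isolation of the symmetry of~$T_{2;y}$ as the feature distinguishing this case from Lemma~\ref{lm:T*-Q}, match the paper's argument precisely.
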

	
\begin{proof}
	As shown in Lemma~\ref{lm:T*-Q}
	\begin{multline*}
		A_s=\krn{t_s}
		  =\sum_{k=0}^{(q+2)(m-1)}b_k(\cos t_s)^k\\
		=\sum_{k=0}^{(q+2)(m-1)}\beta_k\Px k(\cos t_s)
		  \quad(s=1,2,\dots,r).
	\end{multline*}
	Hence
	\begin{multline*}
		Q_2^{(l)}(x)
		  =\sum_{k=0}^{(q+2)(m-1)}\beta_k
			\int_0^\pi\dots\int_0^\pi
				\prod_{\substack{s=1\\ s\ne l}}^r\krn{t_s}\\
		\times\sin^5t_s\,dt_1\dots dt_{l-1}\,dt_{l+1}\dots dt_r
			  \int_0^\pi\T{2;\arr{\cos t}l}l{f,x}
				\Px k(\cos t_l)\sin^5 t_l\,dt_l.
	\end{multline*}
	Let
	\begin{multline*}
		\psi_{l,k}(x)
		  =\int_0^\pi\T{2;\arr{\cos t}l}l{f,x}
			\Px k(\cos t_l)\sin^5 t_l\,dt_l\\
		=\int_0^\pi\T{2;\cos t_l}{}{\T{2;\arr{\cos t}{l-1}}{l-1}{f,x},x}
			  \Px k(\cos t_l)\sin^5 t_l\,dt_l
	\end{multline*}
	Substituting $y=\cos t_l$ we obtain
	\begin{displaymath}
		\psi_{l,k}(x)
		=\int_{-1}^1\T{2;y}{}{\T{2;\arr{\cos t}{l-1}}{l-1}{f,x},x}
			  \Px k(y)\Si{y}^2\,dy.
	\end{displaymath}
	Since operator $\T{2;y}{}{f,x}$ is symmetrical on~$x$ and~$y$,
	i.e.\ for every function~$g$
	holds
	$\T{2;y}{}{g,x}=\T{2;x}{}{g,y}$,
	we have
	\begin{displaymath}
		\psi_{l,k}(x)
		=\int_{-1}^1\T{2;x}{}{\T{2;\arr{\cos t}{l-1}}{l-1}{f,y},y}
			\Px k(y)\Si{y}^2\,dy.
	\end{displaymath}
	Since Corollary~\ref{cr:T*fg} yields
	$\T{2;\arr{\cos t}{l-1}}{l-1}{f,y}\in\Lmu$,
	applying Lemma~\ref{lm:T*fg} we obtain
	\begin{displaymath}
		\psi_{l,k}(x)
		=\int_{-1}^1\T{2;\arr{\cos t}{l-1}}{l-1}{f,y}
			\T{2;x}{}{\Px k,y}\Si{y}^2\,dy.
	\end{displaymath}
	Considering the property of the operator~$T_{2;x}$
	from Lemma~\ref{lm:T*P} we get
	\begin{displaymath}
		\psi_{l,k}(x)
		=\Px k(x)\int_{-1}^1\T{2;\arr{\cos t}{l-1}}{l-1}{f,y}
			\Px k(y)\Si{y}^2\,dy.
	\end{displaymath}
	Applying $l-1$ times Lemma~\ref{lm:T*fg} and
	Lemma~\ref{lm:T*P} we obtain
	\begin{multline*}
		\psi_{l,k}(x)
		  =\Px k(x)\Px k(\cos t_1)\dots\Px k(\cos t_{l-1})\\
		\times\int_{-1}^1f(y)\Px k(y)\Si{y}^2\,dy
		  =\Px k(x)a_k(f)\prod_{s=1}^{l-1}\Px k(\cos t_s).
	\end{multline*}
	where $a_k(f)$ is the Fourier--Jacobi coefficient
	of the function~$f$ with respect to the system
	$\brc{\Px k(x)}_{k=0}^\infty$.
	Substituting $\psi_{l,k}(x)$ in the expression
	for $Q_2^{(l)}(x)$ we get
	\begin{displaymath}
		Q_2^{(l)}(x)
		=\sum_{k=0}^{(q+2)(m-1)}\delta_k\Px k(x).
	\end{displaymath}
	Since $\Px k(x)$ is an algebraic polynomial of degree
	not greater than~$k$,
	the last equality implies that $Q_2^{(l)}(x)$ is
	an algebraic polynomial of degree not greater
	than $(q+2)\*(m-1)$.
	
	Lemma is proved.
\end{proof}

\begin{lmm}\label{lm:properties-T}
	Operator~$T_y$ has the following properties
	\begin{enumerate}
		\item The operator $\T y{}{f,x}$ is linear on~$f$;
		\item $\T1{}{f,x}=f(x)$;
		\item $\T y{}{\Px n,x}=\Px n(x)R_n(y)$
		  $(n=0,1,\dotsc)$,\\
		  where $R_n(y)=\Py n(y)+\frac32\Si y\Px n(y)$;
		\item $\T y{}{1,x}=1$;
		\item $a_k\prn{\T y{}{f,x}}=R_k(y)a_k(f)$
		  $(k=0,1,\dotsc)$.
	\end{enumerate}
\end{lmm}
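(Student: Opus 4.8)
The plan is to reduce every assertion to the two auxiliary operators $T_{1;y}$ and $T_{2;y}$, whose action on the Jacobi polynomials $\Px n$ is recorded in Lemma~\ref{lm:T*P}. The starting point is the kernel identity
\[
	\T y{}{f,x}=\T{1;y}{}{f,x}+\frac32\Si y\T{2;y}{}{f,x}.
\]
Indeed, splitting the kernel of $\T y{}{f,x}$ as $\prn{1-R^2-2\Si y\Si z}+4\Si x\Si y\Si{z}^2$, the first group reproduces $T_{1;y}$ verbatim, while in the second the factor $\Si x$ cancels against the normalisation $1/\prn{\pi\Si x}$ and, after matching the constant $\frac8{3\pi}$ from the definition of $T_{2;y}$, leaves precisely $\frac32\Si y\,\T{2;y}{}{f,x}$. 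This decomposition is the backbone of the argument: once it is available, properties~3--5 follow from Lemmas~\ref{lm:T*P} and~\ref{lm:T*fg} with almost no computation.

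Property~1 is clear, since $f$ enters the defining integral only through $f(R)$, and linearly. For property~2 I would set $y=1$: then $R=x$, both $\Si y\Si z$ and $\Si x\Si y\Si{z}^2$ vanish, and the kernel collapses to $\Si x$, whence $\T1{}{f,x}=\frac{f(x)}\pi\int_{-1}^1\frac{dz}{\sqrt{1-z^2}}=f(x)$.

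For property~3 I apply the decomposition to $f=\Px n$ and use Lemma~\ref{lm:T*P}, which gives $\T{1;y}{}{\Px n,x}=\Px n(x)\Py n(y)$ and $\T{2;y}{}{\Px n,x}=\Px n(x)\Px n(y)$; factoring out $\Px n(x)$ yields $\T y{}{\Px n,x}=\Px n(x)R_n(y)$ with $R_n(y)=\Py n(y)+\frac32\Si y\Px n(y)$, as required. Property~4 is then the special case $n=0$: since $\Px0\equiv1$, one gets $\T y{}{1,x}=R_0(y)=P_2^{(0,0)}(y)+\frac32\Si y$, and because $P_2^{(0,0)}(y)=\frac32y^2-\frac12$ this is identically~$1$.

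The last property uses the self-adjointness of the auxiliary operators. Writing $a_k\prn{\T y{}{f,x}}=\int_{-1}^1\T y{}{f,x}\Px k(x)\Si{x}^2\,dx$, inserting the decomposition, and applying Lemma~\ref{lm:T*fg} with $g=\Px k$ to each summand, I transfer the operator onto the polynomial, obtaining $\int_{-1}^1\T{j;y}{}{f,x}\Px k(x)\Si{x}^2\,dx=\int_{-1}^1f(x)\T{j;y}{}{\Px k,x}\Si{x}^2\,dx$ for $j=1,2$. Lemma~\ref{lm:T*P} then replaces $\T{1;y}{}{\Px k,x}$ by $\Px k(x)\Py k(y)$ and $\T{2;y}{}{\Px k,x}$ by $\Px k(x)\Px k(y)$, so each integral becomes a scalar multiple of $a_k(f)$, and recombining gives $a_k\prn{\T y{}{f,x}}=R_k(y)a_k(f)$. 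The single delicate point is the hypothesis of Lemma~\ref{lm:T*fg}, namely that $\Px k(x)\T{j;y}{}{f,x}\in\Lmu$; this follows from Corollary~\ref{cr:T*fg}, which guarantees $\T{j;y}{}{f,x}\in\Lmu$ for $f\in\Lmu$, together with the boundedness of the polynomial $\Px k$ on $[-1,1]$. Verifying this membership, though routine, is the main technical obstacle and the only place where the integrability hypotheses of the earlier lemmas are genuinely invoked.
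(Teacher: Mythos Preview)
Your argument is correct. The paper does not actually prove this lemma; it simply cites \cite{potapov:mat-99} for the proof, so there is no in-paper argument to compare against. That said, the decomposition $\T y{}{f,x}=\T{1;y}{}{f,x}+\frac32\Si y\,\T{2;y}{}{f,x}$ that you make the backbone of your proof is precisely the one the paper itself exploits later (see the proof of Theorem~\ref{th:T-Q}, where $Q^{(l)}$ is split as $\frac1{(\gamma_m)^r}\bigl(Q_1^{(l)}+\frac32 Q_2^{(l)}\bigr)$), so your approach is the natural and intended one. Your derivations of properties~1--4 are clean, and for property~5 your handling of the integrability hypothesis via Corollary~\ref{cr:T*fg} is exactly right; the implicit assumption $f\in\Lmu$ is needed for $a_k(f)$ to be defined at all, so no generality is lost.
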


Lemma~\ref{lm:properties-T}
is proved in~\cite{potapov:mat-99}.

\begin{cor}\label{cr:properties-T}
	If $P_n(x)$ is an algebraic polynomial
	of degree not greater than $n-1$,
	then for every natural number~$r$,
	for fixed $y_1,y_2,\allowbreak\dots,y_r$,
	functions $\T{\arr y r}r{P_n,x}$
	and $\Dl{\arr y r}r{P_n,x}$
	are algebraic polynomials on~$x$
	of degree not greater than $n-1$.
\end{cor}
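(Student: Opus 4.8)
The plan is to reduce everything to the single-variable operator $\T y{}{\cdot,x}$ and to exploit that the Jacobi system $\brc{\Px k(x)}_{k=0}^\infty$ is a basis of the space of algebraic polynomials. Since $\Px k$ has exact degree~$k$, the polynomials $\Px0,\dots,\Px{n-1}$ span the space of polynomials of degree not greater than $n-1$, so any such $P_n$ admits an expansion
\begin{displaymath}
	P_n(x)=\sum_{k=0}^{n-1}c_k\Px k(x).
\end{displaymath}
First I would treat the case $r=1$. By linearity of the operator (property~(1) of Lemma~\ref{lm:properties-T}) together with property~(3) of the same lemma,
\begin{displaymath}
	\T y{}{P_n,x}=\sum_{k=0}^{n-1}c_k\T y{}{\Px k,x}
	=\sum_{k=0}^{n-1}c_kR_k(y)\Px k(x),
\end{displaymath}
so that, for fixed~$y$, $\T y{}{P_n,x}$ is a linear combination of $\Px0(x),\dots,\Px{n-1}(x)$, i.e.\ an algebraic polynomial in~$x$ of degree not greater than $n-1$. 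In other words, $\T y{}{\cdot,x}$ maps the space of polynomials of degree not greater than $n-1$ into itself.

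The statement for the higher-order operator $\T{\arr y r}r{P_n,x}$ then follows by induction on~$r$. Assuming that, for fixed $y_1,\dots,y_{r-1}$, the function $x\mapsto\T{\arr y{r-1}}{r-1}{P_n,x}$ is a polynomial of degree not greater than $n-1$, the recursive definition
\begin{displaymath}
	\T{\arr y r}r{P_n,x}
	=\T{y_r}{}{\T{\arr y{r-1}}{r-1}{P_n,x},x}
\end{displaymath}
and the already-proved $r=1$ case, applied to the polynomial $\T{\arr y{r-1}}{r-1}{P_n,\cdot}$, show that the result is again, for fixed $y_1,\dots,y_r$, a polynomial in~$x$ of degree not greater than $n-1$.

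For the generalised differences I would argue in exactly the same inductive fashion, starting from $\Dl y{}{P_n,x}=\T y{}{P_n,x}-P_n(x)$ (recall that the difference is built from the translation operator, with $y=\cos t$). Since both $\T y{}{P_n,x}$ and $P_n(x)$ are polynomials in~$x$ of degree not greater than $n-1$, so is their difference, and the recursion $\Dl{\arr y r}r{P_n,x}=\Dl{y_r}{}{\Dl{\arr y{r-1}}{r-1}{P_n,x},x}$ propagates the property to every order~$r$. No genuine obstacle arises here; the only point requiring care is to keep the auxiliary parameters $y_1,\dots,y_r$ fixed at each inductive step, so that the intermediate expressions are bona fide polynomials in the single variable~$x$ and property~(3) of Lemma~\ref{lm:properties-T} can legitimately be invoked.
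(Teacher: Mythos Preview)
Your argument is correct and is exactly the natural route the paper intends: the corollary is stated without proof immediately after Lemma~\ref{lm:properties-T}, and your use of linearity together with property~(3) of that lemma, followed by induction on~$r$, is precisely how one fills in the omitted details. There is nothing to add.
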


\begin{lmm}\label{lm:elementary}
	If $-1\le x\le1$, $-1\le z\le1$, $0\le t\le\pi$ and
	$R=xy+z\sqrt{1-x^2}\*\sqrt{1-y^2}$,
	then $-1\le R\le1$ and
	\begin{gather*}
		\prn{x\sqrt{1-y^2}+yz\sqrt{1-x^2}}^2\le\Si R,\\
		\prn{\sqrt{1-x^2}y+xz\sqrt{1-y^2}}^2\le\Si R,\\
		\Si x\Si z\le\Si R,\\
		\Si y\Si z\le\Si R.
	\end{gather*}
\end{lmm}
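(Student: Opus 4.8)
The plan is to reduce the entire lemma to a single algebraic identity (and its mirror image under $x\leftrightarrow y$) that rewrites $\Si R$ as a sum of two manifestly nonnegative terms; every assertion then drops out by discarding one of the two summands. Throughout I would set $u=\sqrt{1-x^2}$ and $v=\sqrt{1-y^2}$, take $R=xy-zuv$ (the expression attached to the operators $T_y$ and $\hat T_t$ defined above, with $y=\cos t\in[-1,1]$), and keep in mind only the relations $u^2=1-x^2$, $v^2=1-y^2$. The argument is then pure algebra and needs no appeal to the geometry of $t$ and $\varphi$.

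First I would expand $R^2=(xy-zuv)^2=x^2y^2-2xyzuv+z^2u^2v^2$ and compute $\Si R-\Si x\Si z$. Replacing $z^2u^2v^2$ by $z^2(1-x^2)(1-y^2)$ and cancelling, this difference collapses to $x^2v^2+2xyzuv+y^2z^2u^2=(xv+yzu)^2$, which gives the identity
\[
	\Si R=\Si x\Si z+\prn{x\sqrt{1-y^2}+yz\sqrt{1-x^2}}^2 .
\]
Since both summands on the right are $\ge0$, this one identity already delivers three of the claims at once: $\Si R\ge0$, hence $-1\le R\le1$; the bound $\Si x\Si z\le\Si R$ (drop the square); and the bound $\prn{x\sqrt{1-y^2}+yz\sqrt{1-x^2}}^2\le\Si R$ (drop the product).

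The remaining two inequalities come for free by symmetry. Since $R=xy-zuv$ is symmetric in $x$ and $y$ (the cross term $zuv$ is unchanged under $u\leftrightarrow v$), interchanging $x$ with $y$ in the identity above produces
\[
	\Si R=\Si y\Si z+\prn{\sqrt{1-x^2}\,y+xz\sqrt{1-y^2}}^2 ,
\]
whence $\Si y\Si z\le\Si R$ and $\prn{\sqrt{1-x^2}\,y+xz\sqrt{1-y^2}}^2\le\Si R$ follow in exactly the same manner.

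I expect the expansion itself to be completely routine; the only point requiring care — and essentially the whole content of the lemma — is the completion of the square, where the cross term $+2xyzuv$ produced in $\Si R$ by the product $-zuv$ in $R$ must combine with $x^2v^2+y^2z^2u^2$ into the perfect square $(xv+yzu)^2$. This is what fixes the inner sign of the bracket as $+$, in accordance with the operator convention $R=xy-z\sqrt{1-x^2}\sqrt{1-y^2}$; had the opposite sign been taken in $R$, the same computation would place a $-$ inside the bracket. Once the two displayed identities are in hand there is nothing further to prove, since each inequality merely asserts that one nonnegative part of a sum does not exceed the whole.
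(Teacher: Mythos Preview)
Your argument is correct and complete: the identity
\[
\Si R=\Si x\Si z+\prn{x\sqrt{1-y^2}+yz\sqrt{1-x^2}}^2,
\]
together with its $x\leftrightarrow y$ counterpart, yields all four inequalities and the bound $-1\le R\le1$ simply by discarding one nonnegative summand at a time. The paper itself does not give a proof of this lemma; it only quotes the result from the references of Potapov, so there is no ``paper proof'' to compare against. Your self-contained derivation via completing the square is exactly the natural way to do it.

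You were also right to work with $R=xy-z\sqrt{1-x^2}\sqrt{1-y^2}$, the sign convention used in the operators $T_y$ and $\hat T_t$. The ``$+$'' printed in the lemma statement is a typographical slip: with that sign the same computation gives $\Si R=\Si x\Si z+\prn{x\sqrt{1-y^2}-yz\sqrt{1-x^2}}^2$, and the first displayed inequality as written would then be false (take, e.g., $x=y=z=1/\sqrt{2}$). Your remark about the inner sign being fixed by the sign convention in $R$ is precisely the point.
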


Lemma~\ref{lm:elementary} is proved
in~\cite{potapov:mat-99} and~\cite{potapov:trudy-81}.

\begin{lmm}\label{lm:bound-T}
	Let given numbers~$p$ and~$\alpha$ be such that $\allp$;
	\begin{alignat*}2
		\frac12      &<\alpha\le1
		  &\quad &\text{for $p=1$},\\
		1-\frac1{2p} &<\alpha<\frac32-\frac1{2p}
		  &\quad &\text{for $1<p<\infty$},\\
		1            &\le\alpha<\frac32
		  &\quad &\text{for $p=\infty$}.
	\end{alignat*}
	Let $f\in\Lp$.
	The following inequality holds true
	\begin{displaymath}
		\norm{\T y{}{f,x}}\le C\norm f,
	\end{displaymath}
	where the constant~$C$ does not depend on~$f$ and~$y$.
\end{lmm}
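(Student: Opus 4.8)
The plan is to reduce the claim to the boundedness of the two auxiliary operators $T_{1;y}$ and $T_{2;y}$ and then to treat each of these as an integral operator in the variable $R$. Comparing the kernels in the definitions of $\T y{}{f,x}$ and of $\T{1;y}{}{f,x},\T{2;y}{}{f,x}$, the term $4\Si x\Si y\Si{z}^2$ produces, after cancellation of $\Si x$, exactly $\tfrac32\Si y\,\T{2;y}{}{f,x}$, so that
\[
  \T y{}{f,x}=\T{1;y}{}{f,x}+\tfrac32\Si y\,\T{2;y}{}{f,x};
\]
this is consistent with property~(3) of Lemma~\ref{lm:properties-T} together with Lemma~\ref{lm:T*P}. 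Since $0\le\Si y\le1$ and $\Si y$ does not depend on $x$, it suffices to prove $\norm{\T{k;y}{}{f,x}}\le C\norm f$ for $k=1,2$, with $C$ independent of $f$ and $y$.

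For fixed $x,y$ I would replace the variable $z$ by $R=xy-z\sqrt{1-x^2}\sqrt{1-y^2}$. With $R_\pm=xy\pm\sqrt{\Si x\,\Si y}$ and $D=(R_+-R)(R-R_-)$ one has $\Si z=D/(\Si x\,\Si y)$ and $dz/\sqrt{1-z^2}=dR/\sqrt D$, whence
\[
  \T{2;y}{}{f,x}=\frac{8}{3\pi\,\Si{x}^2\Si{y}^2}\int_{R_-}^{R_+}D^{3/2}f(R)\,dR,
\]
while, bounding $|\Si R-2\Si y\Si z|\le3\Si R$ by means of $\Si y\,\Si z\le\Si R$ from Lemma~\ref{lm:elementary},
\[
  \bigl|\T{1;y}{}{f,x}\bigr|\le\frac{3}{\pi\,\Si x}\int_{R_-}^{R_+}\frac{\Si R}{\sqrt D}\,|f(R)|\,dR.
\]
Each operator is thus dominated by an integral operator with a positive, explicit kernel supported on $R\in[R_-,R_+]$.

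To estimate $\int_{-1}^1|\T{k;y}{}{f,x}|^p(\Si x)^{\alpha p}\,dx$ I would apply H\"older's inequality inside the $R$-integral against an auxiliary weight $(\Si R)^{\gamma}$, which amounts to a weighted Schur argument with test function $(\Si x)^{s}$. After interchanging the order of integration the estimate reduces to two one-dimensional bounds of the type
\[
  \int_{R_-}^{R_+}D^{a}(\Si R)^{b}\,dR\le C\,(\Si x)^{\sigma}(\Si y)^{\tau}
\]
(with $a=\tfrac32$ for $k=2$ and $a=-\tfrac12$ for $k=1$), together with the dual integral in $x$ for fixed $R$; the exponent $\gamma$ is chosen so that the surviving powers of $\Si R$ reproduce $(\Si R)^{\alpha p}$ and the powers of $\Si y$ collapse to a bounded factor. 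For $p=\infty$ only the first of these bounds is needed, combined with $|f(R)|\le(\Si R)^{-\alpha}\norm f$.

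The main obstacle is precisely these one-dimensional estimates and the correct choice of $\gamma$. Substituting $R=\cos\theta$, $x=\cos u$, $y=\cos v$ turns $D$ into $4\sin\frac{\theta+u-v}2\sin\frac{\theta-u+v}2\sin\frac{u+v+\theta}2\sin\frac{u+v-\theta}2$ and the integrals into trigonometric ones over $\theta\in[\,|u-v|,u+v\,]$, whose power-law behaviour as $u\to0,\pi$ (that is, $x\to\pm1$) must be balanced against the singular factors $1/\Si x$ and $1/\Si{x}^2$ of the kernels. It is exactly the requirement that these boundary powers be integrable against the weight, uniformly in $y$, that forces the admissible range: the lower bound $\alpha>1-\frac1{2p}$ tames the singularity of the kernels, while the upper bound $\alpha<\frac32-\frac1{2p}$ controls the growth of the $R$-integral, the endpoint values being attained in the limiting cases $p=1$ and $p=\infty$.
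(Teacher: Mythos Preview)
The paper does not give its own proof of this lemma: immediately after the statement it simply says ``Lemma~\ref{lm:bound-T} is also proved in~\cite{potapov:mat-99}.'' So there is no argument in the paper to compare your proposal against; the result is imported wholesale from Potapov's article.

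On its own merits, your plan is the natural one. The identity
\[
  \T y{}{f,x}=\T{1;y}{}{f,x}+\tfrac32\,\Si y\,\T{2;y}{}{f,x}
\]
is correct and the change of variables $z\mapsto R$ with $D=(R_+-R)(R-R_-)$ gives exactly the kernels you wrote; in fact for $T_{1;y}$ one even has the sharper pointwise bound $|\Si R-2\Si y\,\Si z|\le\Si R$, since $0\le\Si y\,\Si z\le\Si R$. Reducing the $L_{p,\alpha}$-boundedness of a positive integral operator to a Schur-type test with power weights $(\Si x)^s$ is the standard device in this setting, and the trigonometric substitution $R=\cos\theta$, $x=\cos u$, $y=\cos v$ indeed turns the key integrals into tractable ones.

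What is missing is precisely the part you flag as ``the main obstacle'': you have not chosen the auxiliary exponent~$\gamma$ (equivalently, the Schur test exponent~$s$) and have not verified the two one-dimensional estimates
\[
  \int_{R_-}^{R_+}D^{a}\,\Si{R}^{b}\,dR\le C\,(\Si x)^{\sigma}(\Si y)^{\tau}
\]
and their duals in~$x$, nor shown that a single admissible choice works throughout the stated range of~$\alpha$. That computation is the entire content of the lemma; the heuristic ``it is exactly the requirement \dots\ that forces the admissible range'' is plausible but is not a proof. As written, your proposal is a correct outline of a viable approach rather than a proof, and to complete it you would need to carry out those estimates explicitly (or cite~\cite{potapov:mat-99}, as the paper does).
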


Lemma~\ref{lm:bound-T}
is also proved in~\cite{potapov:mat-99}.

\begin{cor}\label{cr:bound-T}
	Let given numbers~$p$ and~$\alpha$ be such that $\allp$;
	\begin{alignat*}2
		\frac12 		 &<\alpha\le1
		  &\quad &\text{for $p=1$},\\
		1-\frac1{2p} &<\alpha<\frac32-\frac1{2p}
		  &\quad &\text{for $1<p<\infty$},\\
		1            &\le\alpha<\frac32
		  &\quad &\text{for $p=\infty$}.
	\end{alignat*}
	Let $f\in\Lp$.
	The following inequality holds true
	\begin{displaymath}
		\norm{\T{\arr y r}r{f,x}}\le C\norm f,
	\end{displaymath}
	where the constant~$C$ does not depend on~$f$
	and~$y_j$ $(j=1,2,\allowbreak\dots,r)$.
\end{cor}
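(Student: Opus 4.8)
The plan is to argue by induction on the order~$r$, using Lemma~\ref{lm:bound-T} as the single-step estimate. Note first that the hypotheses on~$p$ and~$\alpha$ in the corollary are identical to those in Lemma~\ref{lm:bound-T}, so that lemma is applicable throughout. The base case $r=1$ is nothing but Lemma~\ref{lm:bound-T} itself, which supplies a constant~$C_1$, independent of~$f$ and~$y$, with $\norm{\T y{}{f,x}}\le C_1\norm f$.

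For the inductive step I would fix $r\ge2$ and assume the assertion for order $r-1$, i.e.\ that there is a constant~$C_{r-1}$, not depending on~$f$ or on $y_1,\dots,y_{r-1}$, with
\begin{displaymath}
	\norm{\T{\arr y{r-1}}{r-1}{f,x}}\le C_{r-1}\norm f.
\end{displaymath}
Writing $g(x)=\T{\arr y{r-1}}{r-1}{f,x}$, the inductive hypothesis first of all guarantees that $g\in\Lp$, since its norm is finite, so that Lemma~\ref{lm:bound-T} may legitimately be applied to~$g$. By the defining recursion $\T{\arr y r}r{f,x}=\T{y_r}{}{g,x}$, Lemma~\ref{lm:bound-T} then yields
\begin{displaymath}
	\norm{\T{\arr y r}r{f,x}}=\norm{\T{y_r}{}{g,x}}\le C_1\norm g\le C_1C_{r-1}\norm f.
\end{displaymath}
Setting $C_r=C_1C_{r-1}=C_1^{\,r}$ completes the induction and gives a constant of the required form.

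The argument is essentially routine, as all the analytic work is already contained in Lemma~\ref{lm:bound-T}. The only point demanding a little care --- and the closest thing to an obstacle --- is the verification at each stage that the intermediate function $g=\T{\arr y{r-1}}{r-1}{f,x}$ again lies in~$\Lp$, so that the single-step lemma is applicable; this is supplied automatically by the finiteness of the norm furnished by the inductive hypothesis. One should also record that the compounded constant $C_1^{\,r}$ remains independent of~$f$ and of all the parameters~$y_j$, precisely because the constant in Lemma~\ref{lm:bound-T} is uniform in~$f$ and~$y$.
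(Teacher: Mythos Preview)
Your argument is correct and is essentially the same as the paper's: the paper's one-line proof reads ``applying $r$~times Lemma~\ref{lm:bound-T} taking into consideration Corollary~\ref{cr:T*fg}'', which is precisely your induction. The only cosmetic difference is that the paper cites Corollary~\ref{cr:T*fg} (membership of the iterates in~$\Lmu$) as auxiliary justification, whereas you secure the intermediate membership $g\in\Lp$ directly from the finiteness of $\norm g$ guaranteed by the inductive hypothesis; either justification suffices.
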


The corollary is proved
by applying $r$~times Lemma~\ref{lm:bound-T}
taking into consideration Corollary~\ref{cr:T*fg}
(see~\cite{potapov:mat-99}).

\subsection{}

\begin{thm}\label{th:T-Q}
	Let~$q$, $m$ and~$r$ given natural numbers
	and let $f\in\Lmu$.
	The function
	\begin{multline*}
		Q(x)=\frac1{(\gamma_m)^r}
			\int_0^\pi\dots\int_0^\pi
			  \prn{\Dl{\arr t r}r{f,x}-(-1)^r f(x)}\\
		\times\prod_{s=1}^r\krn{t_s}\sin^3t_s\,dt_1\dots dt_r,
	\end{multline*}
	where
	\begin{displaymath}
		\gamma_m=\int_0^\pi\krn t\sin^3t\,dt,
	\end{displaymath}
	is an algebraic polynomial of degree not greater
	than $(q+2)\*(m-1)$.
\end{thm}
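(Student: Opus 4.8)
The plan is to compute the Fourier--Jacobi coefficients $a_k(Q)$ and to show that they vanish for every $k>(q+2)(m-1)$, which forces $Q$ to be an algebraic polynomial of the claimed degree. The whole argument rests on item~(5) of Lemma~\ref{lm:properties-T}: the operator $\hat T_t$ acts on the $k$-th coefficient as multiplication by $R_k(\cos t)$, where $R_k(y)=\Py k(y)+\frac32\Si y\Px k(y)$. Before using this I would record that all the functions in sight belong to~$\Lmu$: from the integral formulas one reads off $\T y{}{f,x}=\T{1;y}{}{f,x}+\frac32\Si y\,\T{2;y}{}{f,x}$, so Corollary~\ref{cr:T*fg} (applied factor by factor) shows that every iterated translate, hence $\Dl{\arr t r}r{f,x}$ for fixed $t_1,\dots,t_r$, lies in~$\Lmu$; integrating against the bounded kernel over the compact cube $[0,\pi]^r$ keeps $Q$ in~$\Lmu$ and, by Fubini's theorem, lets me interchange the computation of $a_k$ with the $t$-integration.

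Applying item~(5) and induction on~$r$ (at each step peeling off the outermost $\hat T_{t_r}-\mathrm{id}$) gives
\begin{displaymath}
	a_k\prn{\Dl{\arr t r}r{f,x}}
	=\prod_{s=1}^r\prn{R_k(\cos t_s)-1}\,a_k(f).
\end{displaymath}
Subtracting the coefficient $(-1)^r a_k(f)$ coming from $(-1)^r f$, and using the product structure of the kernel $\prod_s\krn{t_s}\sin^3t_s$ to factor the $r$-fold integral into $r$ equal one-dimensional integrals, I would obtain
\begin{displaymath}
	a_k(Q)=\frac{a_k(f)}{(\gamma_m)^r}
		\Bigl[(\mu_k-\gamma_m)^r-(-1)^r(\gamma_m)^r\Bigr],
	\qquad
	\mu_k=\int_0^\pi R_k(\cos t)\krn t\sin^3t\,dt .
\end{displaymath}
Hence the theorem reduces to a single statement: $\mu_k=0$ whenever $k>(q+2)(m-1)$. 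Indeed, in that case $(\mu_k-\gamma_m)^r=(-\gamma_m)^r=(-1)^r(\gamma_m)^r$ and the bracket vanishes.

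The crux is this vanishing of $\mu_k$. The key observation is that $\krn t=\bigl[(\sin\tfrac{mt}2/\sin\tfrac t2)^2\bigr]^{q+2}$ is the $(q+2)$-nd power of a Fej\'er kernel, hence an even trigonometric polynomial of degree $(q+2)(m-1)$, that is, a polynomial $K^\ast(\cos t)$ in $\cos t$ of degree $(q+2)(m-1)$. The substitution $y=\cos t$ turns $\int_0^\pi(\cdot)\sin^3t\,dt$ into $\int_{-1}^1(\cdot)\Si y\,dy$, and inserting the expression for $R_k$ splits $\mu_k$ into two pieces:
\begin{displaymath}
	\mu_k=\int_{-1}^1\Py k(y)\,K^\ast(y)\,\Si y\,dy
		+\frac32\int_{-1}^1\Px k(y)\,K^\ast(y)\,\Si y^2\,dy .
\end{displaymath}
In the first integral the factor $K^\ast(y)\Si y$ is a polynomial of degree $(q+2)(m-1)+2$, and $\Py k=P_{k+2}^{(0,0)}$ is the Legendre polynomial, orthogonal with weight~$1$ to every polynomial of lower degree; the integral vanishes as soon as $(q+2)(m-1)+2\le k+1$. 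In the second integral $K^\ast$ has degree $(q+2)(m-1)$ and $\Px k=P_k^{(2,2)}$ is orthogonal with weight $\Si y^2$ to every polynomial of degree below~$k$; this integral vanishes as soon as $(q+2)(m-1)\le k-1$. Both inequalities hold exactly when $k>(q+2)(m-1)$, so $\mu_k=0$ there, whence $a_k(Q)=0$ for all such~$k$.

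Finally I would conclude that $Q$ is the asserted polynomial. Since $Q\in\Lmu$ and its Fourier--Jacobi coefficients vanish beyond index $(q+2)(m-1)$, the difference $g=Q-\sum_{k=0}^{(q+2)(m-1)}h_k^{-1}a_k(Q)\Px k(x)$, with $h_k=\int_{-1}^1\prn{\Px k(x)}^2\Si x^2\,dx$, has all Fourier--Jacobi coefficients equal to zero; thus $g(x)\Si x^2\in L_1[-1,1]$ is orthogonal to every algebraic polynomial, and since polynomials are uniformly dense in $C[-1,1]$ it follows that $g=0$ almost everywhere. Therefore $Q$ coincides with the displayed polynomial of degree at most $(q+2)(m-1)$. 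I expect the one genuinely delicate point to be the crux computation: pinning down that $\krn t$ is a polynomial in $\cos t$ of degree exactly $(q+2)(m-1)$ and lining up the two orthogonality ranges so that both pieces of $\mu_k$ die simultaneously precisely for $k>(q+2)(m-1)$; the multiplier identity, the Fubini interchange, and the completeness argument are routine.
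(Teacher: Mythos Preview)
Your argument is correct and takes a genuinely different route from the paper. The paper expands $\Dl{\arr t r}r{f,x}-(-1)^r f(x)$ as a signed sum of iterated translates $\T{\arr{\cos t}l}l{f,x}$ for $l=1,\dots,r$, reduces to showing each resulting integral $Q^{(l)}(x)$ is a polynomial, and then splits $T_y=T_{1;y}+\tfrac32\Si y\,T_{2;y}$ so as to invoke Lemmas~\ref{lm:T*-Q} and~\ref{lm:S-Q}, whose proofs work with the integrals directly via Lemmas~\ref{lm:T*P}, \ref{lm:T*fg} and~\ref{lm:T*P1}. You instead argue spectrally: the multiplier identity $a_k(\hat T_t f)=R_k(\cos t)\,a_k(f)$ from Lemma~\ref{lm:properties-T}(5) yields $a_k(Q)$ in closed form, and the vanishing for $k>(q+2)(m-1)$ boils down to two transparent orthogonality relations for $P_{k+2}^{(0,0)}$ and $P_k^{(2,2)}$. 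Your approach is shorter, bypasses the auxiliary Lemmas~\ref{lm:T*-Q} and~\ref{lm:S-Q} altogether, and neatly avoids having to track the mixed $T_{1;\cdot}/T_{2;\cdot}$ cross terms that appear when one expands $T^l$ for $l>1$. The paper's route, in exchange, stays with explicit integral manipulations throughout and does not require the completeness step you use at the end (passing from ``all high Fourier--Jacobi coefficients vanish'' to ``$Q$ equals its partial Fourier--Jacobi sum''); that step is standard, and your justification via density of polynomials in $C[-1,1]$ is exactly right.
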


\begin{proof}
	To prove the theorem it is sufficient to show that
	for every $l=1,2,\allowbreak\dots,r$ the function
	\begin{displaymath}
		Q^{(l)}(x)
		=\frac1{(\gamma_m)^r}\int_0^\pi\dots\int_0^\pi
			\T{\arr{\cos t}l}l{f,x}
			  \prod_{s=1}^r\krn{t_s}\sin^3t_s\,dt_1\dots dt_r
	\end{displaymath}
	is an algebraic polynomial of degree not greater
	than $(q+2)\*(m-1)$.
	
	It is obvious that the function $Q^{(l)}(x)$
	can be written in the form
	\begin{displaymath}
		Q^{(l)}(x)
		=\frac1{(\gamma_m)^r}
		  \prn{Q_1^{(l)}(x)+\frac32Q_2^{(l)}(x)},
	\end{displaymath}
	where $Q_1^{(l)}(x)$ and $Q_2^{(l)}(x)$ are the functions
	from Lemmas~\ref{lm:T*-Q} and~\ref{lm:S-Q} respectively.
	But, then Lemmas~\ref{lm:T*-Q} and~\ref{lm:S-Q} yield
	that $Q^{(l)}(x)$ is an algebraic polynomial of degree
	not greater than $(q+2)\*(m-1)$.
	
	Theorem is proved.
\end{proof}

\begin{thm}\label{th:HsubE}
	Let given numbers~$p$, $\alpha$, $r$ and~$\lambda$
	be such that $\allp$, $\lambda>0$, $r\in\numN$;
	\begin{alignat*}2
		\alpha &\le2       &\quad &\text{for $p=1$},\\
		\alpha &<3-\frac1p &\quad &\text{for $1<p\le\infty$}.
	\end{alignat*}
	Let $f\in\Lp$ and
	\begin{displaymath}
		\w\le M\delta^\lambda.
	\end{displaymath}
	Then
	\begin{displaymath}
		\E\le CMn^{-\lambda},
	\end{displaymath}
	where the constant~$C$ does not depend on~$f$, $M$
	and~$n$.
\end{thm}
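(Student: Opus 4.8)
The plan is to use the explicit polynomial~$Q$ produced by Theorem~\ref{th:T-Q} as a near-best approximant of~$f$ and to control the error $\norm{f+(-1)^rQ}$ by means of the hypothesis $\w\le M\delta^\lambda$; the two degree parameters~$q$ and~$m$, which are at our disposal here, will afterwards be matched to~$\lambda$ and to~$n$ respectively. First I would check that the restrictions imposed on~$\alpha$ are exactly what places~$f$ in~$\Lmu$, so that Theorem~\ref{th:T-Q} may be invoked. Writing $|f(x)|\Si x^2=\left|f(x)\Si x^\alpha\right|\Si x^{2-\alpha}$ and applying Hölder's inequality with conjugate exponents~$p$ and~$p'$, one has $\normpar{\Si x^{2-\alpha}}{p'}<\infty$ precisely when $\alpha\le2$ for $p=1$ and $\alpha<3-\frac1p$ for $1<p\le\infty$; these are the stated hypotheses, and they yield $\normpar f{1,2}\le\normpar{\Si x^{2-\alpha}}{p'}\norm f<\infty$.

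Next, fix natural numbers~$q$ and~$m$ and let $Q(x)$ be the polynomial of degree at most $(q+2)(m-1)$ supplied by Theorem~\ref{th:T-Q}. Since the normalised kernel has unit total mass, $\gamma_m=\int_0^\pi\krn t\sin^3t\,dt$, the definition of~$Q$ rearranges to
\[
	f(x)+(-1)^rQ(x)
	=\frac{(-1)^r}{(\gamma_m)^r}\int_0^\pi\dots\int_0^\pi
		\Dl{\arr tr}r{f,x}
		\prod_{s=1}^r\krn{t_s}\sin^3t_s\,dt_1\dots dt_r.
\]
As $(-1)^rQ$ is again a polynomial of degree at most $(q+2)(m-1)$, this gives $\Epar{(q+2)(m-1)+1}{f}\le\norm{f+(-1)^rQ}$. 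Applying the generalised Minkowski inequality on the right, then estimating $\norm{\Dl{\arr tr}r{f,\cdot}}\le\wpar r{f,\max_jt_j}\le M\prn{\max_jt_j}^\lambda$ and using $\prn{\max_jt_j}^\lambda\le\sum_jt_j^\lambda$, the $r$-fold integral factorises into~$r$ equal pieces and I obtain
\[
	\norm{f+(-1)^rQ}
	\le\frac{rM}{\gamma_m}\int_0^\pi t^\lambda\krn t\sin^3t\,dt.
\]

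The heart of the matter, and the step I expect to cost the most work, is the kernel bound $\gamma_m^{-1}\int_0^\pi t^\lambda\krn t\sin^3t\,dt\le Cm^{-\lambda}$. Here I would use $\sin\frac t2\ge\frac t\pi$ and $\sin^3t\le t^3$ on $[0,\pi]$ together with $\sin\frac{mt}2\le1$, and substitute $u=\frac{mt}2$. The normalising integral~$\gamma_m$ then scales like~$m^{2q}$, the model integral $\int_0^\infty\prn{\frac{\sin u}u}^{2q+4}u^3\,du$ being convergent for every natural~$q$; the numerator scales like~$m^{2q-\lambda}$ as soon as $\int_0^\infty\prn{\frac{\sin u}u}^{2q+4}u^{3+\lambda}\,du$ converges, which happens exactly when $2q+4>4+\lambda$. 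This is the only point at which the free exponent~$q$ intervenes: I fix once and for all a natural number $q>\frac\lambda2$, and the estimate then holds with a constant independent of~$m$.

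Finally, given~$n$ I would take~$m$ to be the largest integer with $(q+2)(m-1)\le n-1$, so that~$Q$ has degree at most~$n-1$ and hence $\E\le\Epar{(q+2)(m-1)+1}{f}$; this choice forces $m\ge cn$, whence $m^{-\lambda}\le Cn^{-\lambda}$. Chaining the three displayed estimates yields $\E\le CMn^{-\lambda}$, with~$C$ depending only on $p$, $\alpha$, $r$, $\lambda$ and the fixed~$q$ but not on~$f$, $M$ or~$n$; the finitely many small values of~$n$ for which such an~$m$ is unavailable are absorbed into~$C$. Apart from the kernel moment estimate, the argument is only the generalised Minkowski inequality applied to the polynomial structure already secured in Theorem~\ref{th:T-Q}.
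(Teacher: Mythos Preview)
Your proposal is correct and follows essentially the same route as the paper's proof: use the polynomial~$Q$ from Theorem~\ref{th:T-Q} as the approximant, rewrite $f+(-1)^rQ$ as the kernel average of $\Dl{\arr t r}r{f,\cdot}$, apply the generalised Minkowski inequality, invoke the hypothesis on~$\w$, and finish with the standard Jackson kernel moment estimate under the choice $2q>\lambda$ and $\frac{n-1}{q+2}<m\le\frac{n-1}{q+2}+1$. The only cosmetic difference is that you bound $\norm{\Dl{\arr t r}r{f,\cdot}}$ by $\wpar r{f,\max_j t_j}$ and use the elementary $(\max_j t_j)^\lambda\le\sum_j t_j^\lambda$, whereas the paper passes through $\wpar r{f,\sum_j t_j}$ and the inequality $(\sum_j t_j)^\lambda\le C_r\sum_j t_j^\lambda$; your version is marginally cleaner but not substantively different.
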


\begin{proof}
	It can easily be proved that under the conditions
	of the theorem, if $f\in\Lp$, then $f\in\Lmu$.
	
	We choose a natural number~$q$ such that $2q>\lambda$,
	and for each natural number~$n$ we choose the natural
	number~$m$ satisfying the condition
	\begin{equation}\label{eq:m}
		\frac{n-1}{q+2}<m\le\frac{n-1}{q+2}+1.
	\end{equation}
	For those~$q$ and~$m$ polynomial $Q(x)$ defined
	in Theorem~\ref{th:T-Q} is an algebraic polynomial
	of degree not greater than $n-1$.
	Hence
	\begin{multline*}
		\E\le\norm{f(x)-(-1)^{r+1}Q(x)}\\
		  =\Bigg\|
			  \frac1{(\gamma_m)^r}
			  \int_0^\pi\dots\int_0^\pi\Dl{\arr t r}r{f,x}\\
		\times\prod_{s=1}^r\krn{t_s}\sin^3t_s\,dt_1\dots dt_r
			\Bigg\|_{p,\alpha}.
	\end{multline*}
	Applying the generalised inequality of Minkowski
	we obtain
	\begin{multline*}
		\E\le\frac1{(\gamma_m)^r}\int_0^\pi\dots\int_0^\pi
			  \norm{\Dl{\arr t r}r{f,x}}\\
		\times\prod_{s=1}^r\krn{t_s}\sin^3t_s\,dt_1\dots dt_r\\
		\le\frac1{(\gamma_m)^r}
			\int_0^\pi\dots\int_0^\pi\wpar r{f,\sum_{j=1}^r t_j}
			  \prod_{s=1}^r\krn{t_s}\sin^3t_s\,dt_1\dots dt_r.
	\end{multline*}
	Hence, considering the conditions of the theorem we have
	(see~\cite[p.~31]{bari:trigonometricheksie})
	\begin{multline*}
		\E\le\frac M{(\gamma_m)^r}
			\int_0^\pi\dots\int_0^\pi
			  \prn{\sum_{j=1}^r t_j}^\lambda\\
		\times\prod_{s=1}^r\krn{t_s}\sin^3t_s\,dt_1\dots dt_r\\
		\le\Cn M\sum_{j=1}^r\frac1{(\gamma_m)^r}
			\int_0^\pi\dots\int_0^\pi t_j^\lambda
			  \prod_{s=1}^r\krn{t_s}\sin^3t_s\,dt_1\dots dt_r.
	\end{multline*}
	Applying the standard evaluation of the Jackson's kernel,
	considering inequality~\eqref{eq:m}, we obtain
	\begin{displaymath}
		\E\le\Cn Mm^{-\lambda}\le\Cn Mn^{-\lambda}.
	\end{displaymath}
	
	Theorem~\ref{th:HsubE} is proved.
\end{proof}

\begin{thm}\label{th:EsubH}
	Let given numbers~$p$, $\alpha$, $r$ and~$\lambda$
	be such that $\allp$, $r\in\numN$, $0<\lambda<2r$;
	\begin{alignat*}2
		\frac12      &<\alpha\le1
		  &\quad &\text{for $p=1$},\\
		1-\frac1{2p} &<\alpha<\frac32-\frac1{2p}
		  &\quad &\text{for $1<p<\infty$},\\
		1            &\le\alpha<\frac32
		  &\quad &\text{for $p=\infty$}.
	\end{alignat*}
	If $f\in\Lp$ and
	\begin{displaymath}
		\E\le\frac M{n^\lambda},
	\end{displaymath}
	then
	\begin{displaymath}
		\w\le CM\delta^\lambda,
	\end{displaymath}
	where the constant~$C$ does not depend on~~$f$, $M$
	and~$\delta$.
\end{thm}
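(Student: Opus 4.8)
The plan is to argue in the classical Bernstein--Stechkin manner, reducing the inverse estimate to two ingredients: a boundedness property of the modulus and a Bernstein-type inequality for polynomials. Concretely, I would establish
\begin{gather*}
	\wpar r{g,\delta}\le C_0\norm g,\\
	\wpar r{P_n,\delta}\le C_1(n\delta)^{2r}\norm{P_n}
		\quad(\deg P_n\le n-1),
\end{gather*}
with $C_0,C_1$ independent of $g,P_n,\delta,n$. Granting these, for each $k\ge0$ I choose a polynomial $P_{2^k}$ of degree $\le2^k-1$ with $\norm{f-P_{2^k}}=\Epar{2^k}f\le M2^{-k\lambda}$; these converge to $f$ in $\Lp$, and writing $U_k=P_{2^{k+1}}-P_{2^k}$ one has $\norm{U_k}\le2M2^{-k\lambda}$. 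From the decomposition $f=P_1+\sum_{k\ge0}U_k$, the subadditivity of the modulus and the bound $\wpar r{P_1,\delta}=0$ (a constant is annihilated by every difference), I split the series at the index $N$ fixed by $2^{-N}\le\delta<2^{1-N}$, applying the Bernstein inequality on the head $k<N$ and the boundedness on the tail $k\ge N$. The tail gives $\sum_{k\ge N}C_0\norm{U_k}\le CM2^{-N\lambda}\le CM\delta^\lambda$, while the head gives $C\delta^{2r}\sum_{k<N}2^{k(2r-\lambda)}\norm{U_k}\le CM\delta^{2r}2^{N(2r-\lambda)}\le CM\delta^\lambda$, the geometric sum being dominated by its last term precisely because $0<\lambda<2r$. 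Adding the two bounds proves the theorem.

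The boundedness ingredient is immediate. Expanding the composition $\Dl{\arr t r}r{g,x}=\prod_{j=1}^r\bigl(\T{\cos t_j}{}{\cdot,x}-I\bigr)g$ as a signed sum of $2^r$ iterated translation operators, each bounded in $\Lp$ by Corollary~\ref{cr:bound-T} with a constant independent of the $t_j$, yields $\norm{\Dl{\arr t r}r{g,x}}\le2^rC\norm g$, and taking the supremum over $|t_j|\le\delta$ gives the first inequality. Note that the hypotheses on $\alpha$ in the theorem are exactly those of Corollary~\ref{cr:bound-T}, so it applies.

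For the Bernstein inequality I first reduce to the case $r=1$. By Corollary~\ref{cr:properties-T} each partial difference $\Dl{\arr t{j}}{j}{P_n,x}$ is again an algebraic polynomial of degree $\le n-1$; hence, if the single-difference estimate $\norm{\T{\cos t}{}{P_n,x}-P_n(x)}\le C(nt)^2\norm{P_n}$ holds for every such polynomial, iterating it $r$ times gives $\norm{\Dl{\arr t r}r{P_n,x}}\le C^rn^{2r}\prod_{j=1}^r t_j^2\,\norm{P_n}$, whose supremum over $|t_j|\le\delta$ is the required bound. To prove the single-difference estimate I would exploit that, after the change $z\mapsto-z$ in the integral defining $T$, the map $s\mapsto\T{\cos s}{}{P_n,x}$ is even; its first $s$-derivative therefore vanishes at $s=0$, and Taylor's formula with integral remainder gives $\T{\cos t}{}{P_n,x}-P_n(x)=\int_0^t(t-s)\,\partial_s^2\T{\cos s}{}{P_n,x}\,ds$. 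Since the prefactor $1/\Si x$ cancels the kernel at $s=0$, the integrand is smooth; differentiating twice in $s$ brings down $P_n'(R)$ and $P_n''(R)$, with $R=x\cos s-z\sqrt{1-x^2}\,\sin s$, against bounded coefficients, and Lemma~\ref{lm:bernshtein-markov} converts the two derivatives into the factor $n^2$.

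The genuine obstacle is this last weighted bound $\norm{\partial_s^2\T{\cos s}{}{P_n,x}}\le Cn^2\norm{P_n}$, uniform for small $s$. Differentiation raises the Jacobi index, since Lemma~\ref{lm:bernshtein-markov} reads $\normpar{P_n'}{p,\alpha+\frac12}\le Cn\norm{P_n}$, whereas the $\Lp$ norm is taken in $x$ while $P_n$ is evaluated at $R$. To reconcile these I would transfer the weight $\Si x^\alpha$ onto the argument $R$ by the substitution~\eqref{eq:x-z} already used in Lemma~\ref{lm:T*fg}, and then dominate the emerging factors $\Si R$ by $\Si x$ times the derivative weights via the pointwise inequalities of Lemma~\ref{lm:elementary}, for instance $\Si x\Si z\le\Si R$. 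The two inequalities of Lemma~\ref{lm:bernshtein-markov} would be applied in tandem---the first to gain the powers of $n$, the second to return to the index $\alpha$---and it is exactly here that the restrictions on $\alpha$ are consumed. Once the single-difference estimate is secured, the $r$-fold iteration and the dyadic summation of the first paragraph complete the proof, the hypothesis $\lambda<2r$ guaranteeing that the head series sums to its last term.
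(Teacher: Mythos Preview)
Your proposal is correct and follows essentially the same route as the paper: the dyadic decomposition $f=P_{2^N}+\sum Q_k$, the boundedness of the difference via Corollary~\ref{cr:bound-T}, and the Bernstein-type inequality obtained from a second-order Taylor remainder in the translation parameter $s$ (the paper writes it as $\int_0^{t_r}\int_{-u}^u\dots dv\,du$, equivalent to your $\int_0^t(t-s)\partial_s^2\dots ds$ by evenness), combined with Lemmas~\ref{lm:bernshtein-markov} and~\ref{lm:elementary} and the change of variables~\eqref{eq:x-z}. The paper carries out exactly the weighted estimate you flag as the obstacle, splitting into the cases $p=1$, $1<p<\infty$, $p=\infty$ to transfer the weight $\Si x^{\alpha-1}$ onto the variable $R$ and then invoking Lemma~\ref{lm:bernshtein-markov} on each of the three terms $B_1,B_2,B_3$.
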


\begin{proof}
	Let $P_n(x)$ be the polynomial of degree not greater
	than $n-1$ such that
	\begin{displaymath}
		\norm{f-P_n}=\E \quad(n=1,2,\dotsc).
	\end{displaymath}
	We construct the polynomials $Q_k(x)$ by
	\begin{displaymath}
		Q_k(x)=P_{2^k}(x)-P_{2^{k-1}}(x) \quad(k=1,2,\dotsc)
	\end{displaymath}
	and $Q_0(x)=P_1(x)$.
	Since for $k\ge1$ we have
	\begin{multline*}
		\norm{Q_k}=\norm{P_2^k-P_{2^{k-1}}}
		  \le\norm{P_{2^k}-f}+\norm{f-P_{2^{k-1}}}\\
		=\Epar{2^k}f+\Epar{2^{k-1}}f,
	\end{multline*}
	then under the conditions of the theorem it follows that
	\begin{displaymath}
		\norm{Q_k}\le\Cn M2^{-k\lambda}.
	\end{displaymath}
	
	It is obvious that without lost in generality we may
	assume that $t_s\ne0$ $(s=1,2,\allowbreak\dots,r)$.
	For $0<|t_s|<\delta$ $(s=1,2,\allowbreak\dots,r)$
	we estimate
	\begin{displaymath}
		I=\norm{\Dl{\arr t r}r{f,x}}.
	\end{displaymath}
	For every natural number~$N$, considering that
	linearity of the operator $\hatT{t_1}{}{f,x}$
	implies the linearity of the
	operator $\hatT{\arr t r}r{f,x}$,
	i.e.\ the linearity of the difference $\Dl{\arr t r}r{f,x}$,
	we have
	\begin{displaymath}
		I\le\norm{\Dl{\arr t r}r{f-P_{2^N},x}}
		  +\norm{\Dl{\arr t r}r{P_{2^N},x}}.
	\end{displaymath}
	Since $P_{2^N}(x)=\sum_{k=0}^N Q_k(x)$,
	we get
	\begin{displaymath}
		I\le\norm{\Dl{\arr t r}r{f-P_{2^N},x}}
		  +\sum_{k=1}^N\norm{\Dl{\arr t r}r{Q_k,x}}.
	\end{displaymath}
	Applying Corollary~\ref{cr:bound-T} we have
	\begin{displaymath}
		I\le\Cn\Epar{2^N}f+\sum_{k=1}^N I_k.
	\end{displaymath}
	
	Let~$N$ be chosen so that
	\begin{equation}\label{eq:delta}
		\frac\pi{2^N}<\delta\le\frac\pi{2^{N-1}}.
	\end{equation}
	We prove that the following inequality holds true
	\begin{equation}\label{eq:Ik}
		I_k\le\Cn M\delta^{2r}2^{k(2r-\lambda)}.
	\end{equation}
	
	Let
	\begin{displaymath}
		\psi_k(x)=\Dl{\arr t r}r{Q_k,x}.
	\end{displaymath}
	It can be proved that
	\begin{multline}\label{eq:psik}
		\psi_k(x)
		  =\frac1{2\pi\Si x}
			\int_0^{t_r}\int_{-u}^u\int_0^\pi
			  \bigg(
				A(v)(R'_v)^2
				\frac{d^2}{dR_v^2}\Dl{\arr t{r-1}}{r-1}{Q_k,R_v}\\
		-(A(v)R_v-2A'(v)R'_v)
				\frac d{dR_v}\Dl{\arr t{r-1}}{r-1}{Q_k,R_v}\\
		+A''(v)\Dl{\arr t{r-1}}{r-1}{Q_k,R_v}
			  \bigg)\,d\varphi\,dv\,du,
	\end{multline}
	where $R_v=x\cos v-\sqrt{1-x^2}\cos\varphi\sin v$,
	\begin{displaymath}
		A(v)=1-R_v^2-2\sin^2v\sin^2\varphi+4\Si x\sin^2v\sin^4\varphi.
	\end{displaymath}
	Applying estimates from Lemma~\ref{lm:elementary}
	and performing change of variables $z=\cos\varphi$
	we obtain
	\begin{displaymath}
		|\psi_k(x)|
		\le\frac{\Cn[cn:psik]}{1-x^2}
		  \int_0^{t_r}\int_{-u}^u\int_{-1}^1B(R_v)
			  \frac{dz}{\sqrt{1-z^2}}\,dv\,du,
	\end{displaymath}
	where
	\begin{multline*}
		B(R_v)=\Si{R_v}^2
			\left|
			  \frac{d^2}{dR_v^2}\Dl{\arr t{r-1}}{r-1}{Q_k,R_v}
			\right|\\
		+\Si{R_v}
			\left|
			  \frac d{dR_v}\Dl{\arr t{r-1}}{r-1}{Q_k,R_v}
			\right|\\
		+\left|\Dl{\arr t{r-1}}{r-1}{Q_k,R_v}\right|
		  =B_1(R_v)+B_2(R_v)+B_3(R_v).
	\end{multline*}
	Therefore using the generalised Minkowski's inequality
	we get
	\begin{equation}\label{eq:sumBmu}
		I_k=\norm{\psi_k(x)}
		  \le\refC{cn:psik}
			  \int_0^{t_r}\int_{-u}^u\int_{-1}^1
				\norm{\frac{B(R_v)}{1-x^2}}
				\frac{dz}{\sqrt{1-z^2}}\,dv\,du.
	\end{equation}
	
	Let $p=1$.
	Considering that $\alpha\le1$ we obtain
	\begin{displaymath}
		I_k
		\le\int_0^{t_r}\int_{-u}^u\int_{-1}^1\int_{-1}^1
			|B(R_v)|\Si{x}^{\alpha-1}\Si{z}^{\alpha-1}
			  \frac{dx\,dz}{\sqrt{1-z^2}}\,dv\,du.
	\end{displaymath}
	
	Let $1<p<\infty$.
	Applying the H\"older's inequality in the inside integral
	in equation~\eqref{eq:sumBmu},
	considering that $\alpha<\frac32-\frac1{2p}$
	we obtain
	\begin{multline*}
		I_k\le\refC{cn:psik}
			\int_0^{t_r}\int_{-u}^u\int_{-1}^1\brc{\int_{-1}^1
			  |B(R_v)|^p\Si{x}^{p(\alpha-1)}
			  \Si{z}^{p(\alpha-1)}\,dx}^{\frac1p}\\
		\times\Si{z}^{-\frac1{2p}}
			  \Si{z}^{-\alpha+\frac1{2p}+\frac12}\,dz\,dv\,du\\
		\le\Cn\int_0^{t_r}\int_{-u}^u
			  \bigg\{
				\int_{-1}^1\int_{-1}^1|B(R_v)|^p
				  \Si{x}^{p(\alpha-1)}\\
		\times\Si{z}^{p(\alpha-1)}\frac{dx\,dz}{\sqrt{1-z^2}}
			  \bigg\}^{\frac1p}\,dv\,du.
	\end{multline*}
	
	Thus, under the conditions of the theorem,
	for $1\le p<\infty$ we have
	\begin{multline*}
		I_k\le\Cn\int_0^{t_r}\int_{-u}^u
			\bigg\{
			  \int_{-1}^1\int_{-1}^1|B(R_v)|^p
				\Si{x}^{p(\alpha-1)}\\
		\times\Si{z}^{p(\alpha-1)}\frac{dx\,dz}{\sqrt{1-z^2}}
			\bigg\}^{\frac1p}\,dv\,du.
	\end{multline*}
	Performing the change of variables in double integral
	by the formulas
	\begin{align*}
		R&=x\cos v-z\sqrt{1-x^2}\sin v,\\
		V&=\frac{x\sin v+z\sqrt{1-x^2}\cos v}
			{\sqrt{1-\prn{x\cos v-z\sqrt{1-x^2}\sin v}^2}},
	\end{align*}
	we obtain
	\begin{multline*}
		I_k\le\lastC\int_0^{t_r}\int_{-u}^u
			  \bigg\{
				\int_{-1}^1\int_{-1}^1|B(R)|^p
				  \Si{R}^{p(\alpha-1)}\\
		\times\Si{V}^{p(\alpha-1)-\frac12}\,dR\,dV
			  \bigg\}^{\frac1p}\,dv\,du.
	\end{multline*}
	Since, under the conditions of theorem $\alpha>1-\frac1{2p}$,
	it follows that
	\begin{multline*}
		I_k\le\Cn
			\int_0^{t_r}\int_{-u}^u
			  \brc{
				\int_{-1}^1|B(R)|^p\Si{R}^{p(\alpha-1)}\,dR
			  }^{\frac1p}\,dv\,du\\
		\le\Cn t_r^2\normpar{B(R)}{p,\alpha-1}.
	\end{multline*}
	
	Let now $p=\infty$.
	Considering the estimates from Lemma~\ref{lm:elementary}
	and that $\alpha\ge1$,
	inequality~\eqref{eq:sumBmu} yields
	\begin{multline*}
		I_k\le\refC{cn:psik}
			\int_0^{t_r}\int_{-u}^u\int_{-1}^1
			  \esssup_{-1\le x\le1}|B(R_v)|
				\Si{x}^{\alpha-1}\frac{dz}{\sqrt{1-z^2}}\,dv\,du\\
		\le\refC{cn:psik}\normpar{B(x)}{\infty,\alpha-1}
			  \int_0^{t_r}\int_{-u}^u\int_{-1}^1
				\Si{z}^{-\alpha+\frac12}\,dz\,dv\,du.
	\end{multline*}
	Hence, considering that $\alpha<\frac32$ we get
	\begin{displaymath}
		I_k\le\Cn t_r^2\normpar{B(x)}{\infty,\alpha-1}.
	\end{displaymath}
	Thus for all $\allp$ we proved that
	\begin{displaymath}
		I_k\le\Cn t_r^2\normpar{B(x)}{p,\alpha-1}.
	\end{displaymath}
	
	Applying Lemma~\ref{lm:bernshtein-markov} and
	Corollaries~\ref{cr:properties-T} and~\ref{cr:bound-T}
	under the conditions of the theorem we obtain
	\begin{multline*}
		I_k=\norm{\Dl{\arr t r}r{Q_k,x}}
		  \le\lastC t_r^2\normpar{B(x)}{p,\alpha-1}\\
		\le\lastC t_r^2
			\prn{\normpar{B_1(x)}{p,\alpha-1}
			  +\normpar{B_2(x)}{p,\alpha-1}
			  +\normpar{B_3(x)}{p,\alpha-1}}\\
		=\lastC t_r^2
			\bigg\{
			  \normpar{\frac{d^2}{dx^2}\Dl{\arr t{r-1}}{r-1}{Q_k,x}}
				{p,\alpha+1}\\
		+\normpar{\frac d{dx}\Dl{\arr t{r-1}}{r-1}{Q_k,x}}
				{p,\alpha}
			  +\normpar{\Dl{\arr t{r-1}}{r-1}{Q_k,x}}{p,\alpha-1}
			\bigg\}\\
		\le\Cn t_r^2 2^{2k}\norm{\Dl{\arr t{r-1}}{r-1}{Q_k,x}}.
	\end{multline*}
	Applying $r$~times this inequality it follows that
	\begin{displaymath}
		I_k\le\Cn t_1^2\dots t_r^2 2^{2kr}\norm{Q_k}.
	\end{displaymath}
	Therefore we have
	\begin{displaymath}
		I_k\le\Cn M\delta^{2r}2^{k(2r-\lambda)}.
	\end{displaymath}
	
	Inequality~\eqref{eq:Ik} is proved.
	
	Inequalities~\eqref{eq:Ik} and~\eqref{eq:delta} yield
	\begin{displaymath}
		I\le\Cn M\bigg(
			\delta^\lambda+\delta^{2r}\sum_{k=1}^N2^{k(2r-\lambda)}
		  \bigg)
		\le\Cn M\prn{\delta^\lambda+\delta^{2r}2^{N(2r-\lambda)}}
		\le\Cn M\delta^\lambda.
	\end{displaymath}
	
	Theorem~\ref{th:EsubH} is completed.
\end{proof}

Now we formulate the theorem of coincidence
of the class $H(p,\alpha,r,\lambda)$
with the class $E(p,\alpha,\lambda)$,
and the inverse theorem.

\begin{thm}\label{th:coincidence}
	Let given numbers~$p$, $\alpha$, $r$ and~$\lambda$
	be such that $\allp$, $0<\lambda<2r$, $r\in\numN$;
	\begin{alignat*}2
		\frac12      &<\alpha\le1
		  &\quad &\text{for $p=1$},\\
		1-\frac1{2p} &<\alpha<\frac32-\frac1{2p}
		  &\quad &\text{for $1<p<\infty$},\\
		1            &\le\alpha<\frac32
		  &\quad &\text{for $p=\infty$}.
	\end{alignat*}
	The class $H(p,\alpha,r,\lambda)$
	coincides with the class~$E(p,\alpha,\lambda)$.
\end{thm}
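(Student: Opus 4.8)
The plan is to deduce the coincidence of the two classes as an immediate consequence of the two preceding theorems, reading ``coincidence'' as mutual inclusion. First I would observe that by definition $f\in H(p,\alpha,r,\lambda)$ means precisely that $\w\le C\delta^\lambda$ for some constant $C$ independent of~$\delta$, while $f\in E(p,\alpha,\lambda)$ means precisely that $\E\le Cn^{-\lambda}$ for some constant independent of~$n$. Thus the statement $H(p,\alpha,r,\lambda)=E(p,\alpha,\lambda)$ splits into the two inclusions $H(p,\alpha,r,\lambda)\subseteq E(p,\alpha,\lambda)$ and $E(p,\alpha,\lambda)\subseteq H(p,\alpha,r,\lambda)$.

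The inclusion $H(p,\alpha,r,\lambda)\subseteq E(p,\alpha,\lambda)$ is exactly the content of Theorem~\ref{th:HsubE}: taking $M=C$ in its hypothesis $\w\le M\delta^\lambda$, that theorem delivers $\E\le CMn^{-\lambda}$, which is the defining condition of $E(p,\alpha,\lambda)$. Conversely, the inclusion $E(p,\alpha,\lambda)\subseteq H(p,\alpha,r,\lambda)$ is exactly the content of Theorem~\ref{th:EsubH}: from $\E\le M/n^\lambda$ it produces $\w\le CM\delta^\lambda$, which is the defining condition of $H(p,\alpha,r,\lambda)$. Combining the two inclusions yields the asserted equality of classes.

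The only point that genuinely requires checking---and which I expect to be the sole, minor, obstacle---is that the hypotheses imposed in the present theorem are strong enough to make both Theorem~\ref{th:HsubE} and Theorem~\ref{th:EsubH} applicable simultaneously. The ranges of~$\alpha$ stated here coincide verbatim with the (narrower) ranges required by Theorem~\ref{th:EsubH}, so that theorem applies at once; and the restriction $0<\lambda<2r$ trivially implies the $\lambda>0$ demanded by Theorem~\ref{th:HsubE}. It then remains to verify that each $\alpha$-range assumed here is contained in the broader range permitted by Theorem~\ref{th:HsubE}. For $p=1$ the assumption $\alpha\le1$ sits inside $\alpha\le2$; for $p=\infty$ the assumption $\alpha<\frac32$ sits inside $\alpha<3$; and for $1<p<\infty$ one checks that $\frac32-\frac1{2p}<3-\frac1p$, equivalently $\frac1{2p}<\frac32$, which holds for all $p>1$. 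Hence both theorems apply under the stated hypotheses, and the proof reduces to citing them.
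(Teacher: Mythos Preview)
Your proposal is correct and matches the paper's approach exactly: the paper simply states that Theorem~\ref{th:coincidence} is implied by Theorems~\ref{th:HsubE} and~\ref{th:EsubH}. Your additional verification that the hypotheses here are compatible with those of both earlier theorems is a welcome elaboration of what the paper leaves implicit.
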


Theorem~\ref{th:coincidence}
is implied by Theorems~\ref{th:HsubE} and~\ref{th:EsubH}
proved above.

\begin{thm}\label{th:converse}
	Let given numbers~$p$, $\alpha$, $r$ and~$\lambda$
	be such that $\allp$, $0<\lambda<2r$, $r\in\numN$;
	\begin{alignat*}2
		\frac12      &<\alpha\le1
		  &\quad &\text{for $p=1$},\\
		1-\frac1{2p} &<\alpha<\frac32-\frac1{2p}
		  &\quad &\text{for $1<p<\infty$},\\
		1            &\le\alpha<\frac32
		  &\quad &\text{for $p=\infty$}.
	\end{alignat*}
	If $f\in\Lp$, then the following inequality holds
	\begin{displaymath}
		\wpar r{f,\frac1n}
		\le\frac\Cn{n^{2r}}\sum_{\nu=1}^n\nu^{2r-1}\Epar\nu f,
	\end{displaymath}
	where the constant~$C$ does not depend on~$f$ and~$n$.
\end{thm}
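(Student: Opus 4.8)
The plan is to run the argument of Theorem~\ref{th:EsubH} again, but without ever invoking a decay hypothesis on $\Epar n f$; instead I would keep every best approximation explicit, so that the outcome is a Stechkin-type arithmetic sum. Fix~$n$, put $\delta=1/n$, and let $P_m$ be a polynomial of degree $\le m-1$ with $\norm{f-P_m}=\Epar m f$. As before I form the dyadic blocks $Q_k=P_{2^k}-P_{2^{k-1}}$ $(k\ge1)$ and $Q_0=P_1$, and I choose the integer~$N$ by $2^{N-1}\le n<2^N$. For $|t_j|\le\delta$, using the telescoping identity $P_{2^N}=\sum_{k=0}^N Q_k$ and the linearity of $\Dl{\arr t r}r{\cdot,x}$, I would split
\begin{displaymath}
	\norm{\Dl{\arr t r}r{f,x}}
	\le\norm{\Dl{\arr t r}r{f-P_{2^N},x}}
		+\sum_{k=1}^N\norm{\Dl{\arr t r}r{Q_k,x}}.
\end{displaymath}
The first term is bounded by $\Cn\Epar{2^N}f$ through Corollary~\ref{cr:bound-T}, since the difference of order~$r$ is a fixed linear combination of uniformly bounded translation operators, exactly as in the proof of Theorem~\ref{th:EsubH}.

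For each summand I would reuse, verbatim, the chain of estimates already established in that proof: the representation~\eqref{eq:psik}, the inequalities of Lemma~\ref{lm:elementary}, the indicated change of variables, and finally Lemma~\ref{lm:bernshtein-markov} together with Corollaries~\ref{cr:properties-T} and~\ref{cr:bound-T}, applied $r$~times to the polynomial $Q_k$ of degree~$<2^k$. This produces
\begin{displaymath}
	\norm{\Dl{\arr t r}r{Q_k,x}}
	\le\Cn t_1^2\cdots t_r^2\,2^{2kr}\norm{Q_k}
	\le\lastC\,\delta^{2r}2^{2kr}\norm{Q_k};
\end{displaymath}
the key observation is that this inequality never used the hypothesis $\Epar n f\le Mn^{-\lambda}$, which entered Theorem~\ref{th:EsubH} only on its last line. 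Combining it with $\norm{Q_k}\le\Epar{2^k}f+\Epar{2^{k-1}}f\le2\Epar{2^{k-1}}f$, where I use that $\Epar\nu f$ is non-increasing in~$\nu$, and taking the supremum over $|t_j|\le\delta=1/n$, I obtain
\begin{displaymath}
	\wpar r{f,\tfrac1n}
	\le\Cn\Epar{2^N}f
		+\frac{\Cn}{n^{2r}}\sum_{k=1}^N 2^{2kr}\Epar{2^{k-1}}f.
\end{displaymath}

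The remaining, genuinely new step is the passage from this dyadic sum to the arithmetic sum $\sum_{\nu=1}^n\nu^{2r-1}\Epar\nu f$. Here I would compare each dyadic term with the block of integers lying just below it: since $\Epar\nu f$ is non-increasing, for $2^{k-2}<\nu\le2^{k-1}$ one has $\Epar\nu f\ge\Epar{2^{k-1}}f$ and $\nu^{2r-1}\ge2^{(k-2)(2r-1)}$, and the block contains $2^{k-2}$ integers, so that $2^{2kr}\Epar{2^{k-1}}f\le\Cn\sum_{2^{k-2}<\nu\le2^{k-1}}\nu^{2r-1}\Epar\nu f$. These blocks are disjoint and exhaust $\{2,\dots,2^{N-1}\}\subset\{1,\dots,n\}$, so summation over $k$ (treating $k=1$ as a single absorbed term) bounds the dyadic sum by $\Cn\sum_{\nu=1}^n\nu^{2r-1}\Epar\nu f$. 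The tail is absorbed by the same sum: since $2^N>n$ gives $\Epar{2^N}f\le\Epar n f$, and $\sum_{\nu=1}^n\nu^{2r-1}\Epar\nu f\ge\Epar n f\sum_{n/2<\nu\le n}\nu^{2r-1}\ge\Cn n^{2r}\Epar n f$, we get $\Epar{2^N}f\le\Cn n^{-2r}\sum_{\nu=1}^n\nu^{2r-1}\Epar\nu f$. Assembling the three estimates yields the asserted inequality (the stated range $0<\lambda<2r$ plays no role in the conclusion and is inherited from the companion theorems). I expect the only delicate point to be the bookkeeping of this last dyadic-to-arithmetic comparison—choosing the correct adjacent block, counting its integers, and respecting the monotonicity direction of $\Epar\nu f$—while everything preceding it is a direct transcription of the apparatus already built for Theorem~\ref{th:EsubH}.
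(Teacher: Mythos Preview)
Your proposal is correct and follows essentially the same route as the paper: reuse the polynomial estimates from the proof of Theorem~\ref{th:EsubH} to obtain $\wpar r{f,1/n}\le C\,\Epar{2^N}f+C\,n^{-2r}\sum_k 2^{2kr}\norm{Q_k}$, and then convert the dyadic sum to the arithmetic sum $\sum_{\nu=1}^n\nu^{2r-1}\Epar\nu f$ by monotonicity of $\Epar\nu f$. The only cosmetic differences are your choice $2^{N-1}\le n<2^N$ versus the paper's $n/2<2^N\le n+1$, and that the paper first absorbs $\Epar{2^N}f$ into the dyadic sum (using $n\le 2^{N+1}$) before the arithmetic comparison, whereas you bound it separately against the tail $\sum_{n/2<\nu\le n}\nu^{2r-1}\Epar\nu f$; both are equivalent.
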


\begin{proof}
	Let $P_n(x)$ be the polynomial of degree not greater
	than $n-1$ such that
	\begin{displaymath}
		\norm{f-P_n}=\E \quad(n=1,2,\dotsc),
	\end{displaymath}
	and
	\begin{displaymath}
		Q_k(x)=P_{2^k}(x)-P_{2^{k-1}}(x) \quad(k=1,2,\dotsc),
	\end{displaymath}
	$Q_0(x)=P_1(x)$.
	
	For given~$n$ we chose the natural number~$N$ such that
	\begin{displaymath}
		\frac n2<2^N\le n+1.
	\end{displaymath}
	By the proof of Theorem~\ref{th:EsubH} it follows that
	\begin{multline*}
		\wpar r{f,\frac1n}
		  \le\Cn
			\bigg(
			  \Epar{2^N}f
			  +\frac1{n^{2r}}\sum_{\mu=1}^N2^{2\mu r}\norm{Q_k}
			\bigg)\\
		\le2\lastC
			\bigg(
			  \Epar{2^N}f
			  +\frac1{n^{2r}}
				  \sum_{\mu=1}^N2^{2\mu r}
					\prn{\Epar{2^\mu}f+\Epar{2^{\mu-1}}f}
			  \bigg)\\
		\le4\lastC
			\bigg(
			  \Epar{2^N}f
			  +\frac1{n^{2r}}
				\sum_{\mu=0}^{N-1}2^{2(\mu+1)r}\Epar{2^\mu}f
			\bigg)\\
		\le\frac\Cn{n^{2r}}
			\sum_{\mu=0}^N2^{2(\mu+1)r}\Epar{2^\mu}f.
	\end{multline*}
	Considering that for $\mu\ge1$ we have
	\begin{displaymath}
		\sum_{\nu=2^{\mu-1}}^{2^\mu-1}\nu^{2r-1}\Epar\nu f
		\ge\Epar{2^\mu}f2^{\mu-1}2^{(\mu-1)(2r-1)}
		\ge\Cn2^{2(\mu+1)r}\Epar{2^\mu}f,
	\end{displaymath}
	it follows that
	\begin{multline*}
		\wpar r{f,\frac1n}
		  \le\frac\Cn{n^{2r}}
			\bigg(
			  2^{2r}\Epar1f
				+\sum_{\mu=1}^N\sum_{\nu=2^{\mu-1}}^{2^\mu-1}
				  \nu^{2r-1}\Epar\nu f
			  \bigg)\\
		\le\frac\Cn{n^{2r}}\sum_{\nu=1}^n\nu^{2r-1}\Epar\nu f.
	\end{multline*}
	
	Theorem~\ref{th:converse} is proved.
\end{proof}

\bibliographystyle{amsplain}
\bibliography{maths}

\end{document}